\theoremstyle{plain}
\newtheorem{prop}[subsection]{Proposition}
\newtheorem{sublemm}[subsubsection]{Lemma}
\newtheorem{theo}[subsection]{Theorem}
\newtheorem{coro}[subsection]{Corollary}
\def\t{\otimes}
\def\fdr{\rightarrow}
\def\PP{\mathsf P}
\def\QQ{\mathsf Q}
\def\PMQ{{}^{}_\PP \mathcal M^0_\QQ}
\def\KK{\mathbb K}
\def\NN{\mathbb N}
\def\ZZ{\mathbb Z}
\def\we{\stackrel{\sim}{\rightarrow}}
\DeclareMathOperator{\Span}{Span}
\DeclareMathOperator*{\colim}{colim}
\title[$\Gamma$-Homology of algebras over an operad]{Gamma-Homology of algebras over an operad}
\author{Eric Hoffbeck}
\address{Laboratoire Paul Painlev\'e, Universit\'e de Lille 1, Cit\'e Scientifique, 
59655 Villeneuve d'Ascq Cedex, France}
\email{Eric.Hoffbeck@math.univ-lille1.fr}
\subjclass[2000]{Primary: 16E40. Secondary: 18D50, 18G55, 18G60} 
\begin{document}

\begin{abstract}
The purpose of this paper is to study generalizations of Gamma-homology in the context of operads. 
Good homology theories are associated to operads under appropriate cofibrancy hypotheses, but this requirement is not satisfied by usual operads outside the characteristic zero context. In that case, the idea is to pick a cofibrant replacement $\QQ$ of the given operad $\PP$. We can apply to $\PP$-algebras the homology theory associated to $\QQ$ in order to define a suitable homology theory on the category of $\PP$-algebras. We make explicit a small complex to compute this homology when the operad $\PP$ is binary and Koszul. In the case of the commutative operad $\PP = \mathsf{Com}$, we retrieve the complex introduced by Robinson for the Gamma-homology of commutative algebras.
\end{abstract}
    
\maketitle

The classical homology theories of commutative algebras (Harrison homology in the differential graded setting over a field of characteristic $0$, cf. \cite{Harrison}, Andr\'e-Quillen homology in the simplicial setting over a ring of any characteristic, cf. \cite{Quillen} and \cite{Andre}) can be considered as homology theories associated to the commutative operad $\mathsf {Com}$.
There is another homology theory for commutative algebras, $\Gamma$-homology (Gamma-homology in plain words, also called topological Andr\'e-Quillen), which has been introduced by Robinson and Whitehouse in \cite{RW}, and by Basterra in \cite{Bast} (with a different point of view), to solve obstruction problems in homotopy theory.  In the setting of \cite{RW}, Gamma-homology is defined as the homology theory associated to an $E_\infty$-operad (a cofibrant replacement of $\mathsf {Com}$).
This new homology can be defined in the context of differential graded or simplicial context or in the context of spectra, and gives the same result in each case (cf. Mandell \cite{Man}), in contrast with the usual Andr\'e-Quillen homology.

The purpose of this paper is to study generalizations of $\Gamma$-homology in the context of operads.

Usual methods of homotopical algebra apply to the categories of algebras associated to operads  which are cofibrant, or at least which fulfill sufficiently strong cofibrancy requirements.
As a consequence, we have a good homology theory $H_*^\QQ$ associated to any such operad $\QQ$. 
But many usual operads, like the commutative operad $\mathsf {Com}$ or the Lie operad $\mathsf {Lie}$, do not fit this framework (unless we work with differential graded modules over a field of characteristic $0$). In this situation, a natural idea is to pick a cofibrant replacement of the given operad $\PP$, let $\QQ \we \PP$, and to apply the homology $H_*^\QQ$ to $\PP$-algebras  in order to obtain a consistent homology theory on the category of $\PP$-algebras.
We use the notation $H\Gamma_*^\PP = H_*^\QQ$ and the name $\Gamma$-homology to refer to this homology theory after observing that different choices of $\QQ$ give the same result.

This generalizes the usual notion of $\Gamma$-homology where $\PP =\mathsf{Com}$ and $\QQ$ is an $E_{\infty}$-operad. The homology $H_*^\QQ$ associated to a cofibrant replacement of the operad $\mathsf{Lie}$ has also been used by Chataur, Rodriguez and Scherer in \cite{CRS}. 

The problem is that the choice of a cofibrant replacement is satisfying in theory, but making such a cofibrant replacement explicit is often very difficult (especially when the ground ring is not a field of characteristic $0$).
We give a direct definition of  $H\Gamma_*^\PP$, which agrees with the initial one, but  where the choice of an operadic cofibrant replacement is avoided. The idea is to use the model category on $\PP$-bimodules, which only needs mild assumptions on $\PP$. We  show how to define a complex to determine $H\Gamma_*^\PP$ from a choice of a cofibrant replacement of the operad $\PP$, not in the category of operads, but in the category of $\PP$-bimodules, the operad $\PP$ being viewed as a bimodule over itself. The category of $\PP$-bimodules is easier to deal with than the category of operads.

In \cite{Robinson}, Robinson makes explicit a small complex, analogous to Harrison's complex, which computes usual $\Gamma$-homology.
In the case where the operad $\PP$ is Koszul, we define an explicit complex to compute the $\Gamma$-homology associated to $\PP$.
Recall that an operad is Koszul if we have a quasi-isomorphism between $(\PP \circ K\PP \circ \PP, \partial)$ and $\PP$, where $K\PP$ is the Koszul construction, defined by $K(\PP)_{(s)}:=H_s(B_*(\PP)_{(s)}, \partial)$. In \cite{Balav}, Balavoine defined  a complex computing $H_*^\PP$ when working over a field of characteristic $0$, using the Koszul construction.
When we work over a ring of any characteristic, finding a complex is more complicated, as we need to resolve the symmetries in $K\PP$. This can be done by tensoring the Koszul construction by the acyclic bar construction of the symmetric group. 
Finally, we get a small explicit complex computing the $\Gamma$-homology of $\PP$-algebras.
Our construction coincides with Robinson's complex for the case $\PP = \mathsf {Com}$. 
As an illustration, we make our complex explicit in the case $\PP = \mathsf{Lie}$.

We also define a cohomology theory $H\Gamma^*_\PP$ associated to any operad $\PP$.

\vspace{0.5cm}

In Section 1, we recall the model category structures we use in the paper: dg-modules, $\Sigma_*$-modules, bimodules, algebras over operads.  Most of the model structures we consider are defined by a transfer of structure. We make the cofibrations explicit in each case.
In a second part, we recall the usual notion of homology for algebras over a cofibrant operad, and show how to reduce the complex when we are given a cofibrant replacement of bimodules. Then we make a similar construction of a reduced complex when the operad is not cofibrant. This leads us to the definition of $\Gamma$-homology of algebras over an operad (without any cofibrancy hypothesis). 
In Section 3, we construct an explicit complex for any binary Koszul operad $\PP$ to compute $\Gamma$-homology. This complex is defined using the Koszul construction $K\PP$ and the acyclic bar construction of the symmetric group.

\vspace{0.5cm}
\textbf{Convention.} 
We work in the differential graded setting. 
We take a category of differential graded modules (for short dg-modules) over a fixed base ring $\KK$ as a base category (see Section \ref{modcatdg} for details). We use the letter $\mathcal C$ to denote this category. When necessary, we assume tacitely that any dg-module, and more generally that any object defined over this base category, consists of projective modules over the ground ring.

We review the definition of the model category of $\Sigma_*$-modules underlying the category of operads in Section \ref{modcatsigma}, the model category of bimodules in Section \ref{modcatbimod}.
All operads $\PP$ will be assumed to be connected, in the sense that $\PP(0)=0$ and $\PP(1)=\KK$. All $\Sigma_*$-modules $M$, and more generally any object defined over the category of $\Sigma_*$-modules, will be assumed to be connected, that is $M(0)=0$.

\vspace{2cm}

\section{Model categories}
We review here the model structures for the categories which are used in this paper.
For general references on the subject, we refer the reader to the survey of Dwyer and Spalinksi \cite{DS} and the books of Hirschhorn \cite{Hirsch} and Hovey \cite{Hovey}. For model structures in the operadic context, we refer to the articles of Hinich \cite{Hinich} and of Goerss and Hopkins \cite{GH}, and the book of Fresse \cite{BouquinBenoit}.

\subsection{Transfer of structure}\label{transf}
We use the notion of a pair of adjoint functors to transport model structures.  
Suppose we have an adjunction $$F : \mathcal{X} \rightleftarrows \mathcal{A} : U$$ such that $\mathcal{X}$ is a cofibrantly
generated model category and $\mathcal{A}$ is a category equipped with colimits and limits. We can then define classes of weak equivalences, fibrations and cofibrations in $\mathcal{A}$.
\begin{itemize}
 \item The weak equivalences in $\mathcal A$ are morphisms $f$ such that $U(f)$ is a weak equivalence in $\mathcal X$.
 \item The fibrations in $\mathcal A$ are morphisms $f$ such that $U(f)$ is a fibration in $\mathcal X$.
 \item The cofibrations are the morphisms which have the left lifting property (in short, LLP) with respect to acyclic fibrations.
\end{itemize}

Under some technical hypotheses (cf. \cite[Theorem 11.3.2]{Hirsch}), a classical result says that $\mathcal A$ is equipped with a model structure given by the weak equivalences, fibrations and cofibrations above. Under weaker hypotheses (cf. \cite[Theorem 12.1.4]{BouquinBenoit}), the category $\mathcal A$ is equipped with a semi-model category, that is the lifting and factorization axioms only hold when the morphisms have a cofibrant domain. Semi-model categories will be enough for us here.

We can describe the generating (acyclic) cofibrations of the semi-model category $\mathcal A$ explicitely: they are the morphisms $F(i) : F(C) \fdr F(D)$ such that $i$ ranges over the generating (acyclic) cofibrations of $\mathcal X$.

\subsection{Model category structure for dg-modules}\label{modcatdg}
In this paper, the dg-modules we consider are $\ZZ$-graded modules endowed with a differential $\delta$ decreasing the degree by $1$. The category of dg-modules is denoted by $\mathcal C$. 
The internal hom of this category is denoted by $Hom_{\mathcal C}(C,D)$, for all $C,D\in\mathcal C$. This dg-module is spanned in degree $d$ by the linear maps $f: C\rightarrow D$ which raises degrees by $d$. The differential of such a map in $Hom_{\mathcal C}(C,D)$ is defined by its graded commutator with the internal differential of $C$ and $D$. We adopt the terminology of homomorphisms to distinguish the elements of the dg-hom $Hom_{\mathcal C}(C,D)$ from the actual morphisms of dg-modules, the linear maps which preserve gradings and commute with differentials.

The category of dg-modules is equipped with its usual model structure:
The weak equivalences are the quasi-isomorphisms and the fibrations are degreewise surjective maps (cofibrations are characterized by the LLP with  respect to acyclic fibrations). 

Let $D_n = \KK d_n \oplus \KK c_{n-1}$  where $d_n$ is a homogeneous element in degree $n$ sent by the differential to $c_{n-1}$ in degree $n-1$. Let $C_n$ be $\KK c_{n-1}$, submodule of $D_n$. The embeddings  $C_n \fdr D_n$, $n \in \ZZ$, define a set of generating cofibrations in $\mathcal C$. The maps $0 \fdr D_n$ define generating acyclic cofibrations.

In what follows, the underlying dg-module of any object is tacitely assumed to be cofibrant. 

\subsection{Twisted dg-modules}
In general, we assume that a dg-module $C$ is equipped with a differential $\delta : C \fdr C$. We sometimes twist this internal differential by a cochain $\partial \in Hom_{\mathcal C}(C,C)$ of degree $-1$ in order to get a new differential $\delta + \partial$. We assume the relation $\delta \circ \partial + \partial \circ \delta + \partial^2=0$, in order to obtain that $\delta+\partial$ satisfies $(\delta+\partial)^2=0$.
We usually omit the internal differential $\delta$ in the notation: We write $C$ for the module $C$ with differential $\delta$ and write $(C, \partial)$ to denote the module $C$ with differential $\delta+\partial$. 

We are going to define quasi-free objects (algebras over operads, bimodules), twisted objects $(C, \partial)$ such that $C$ is free with respect to an algebraic structure.

\subsection{Model category structure for $\Sigma_*$-modules}\label{modcatsigma}
We use the notation $\mathcal M$ for $\Sigma_*$-modules. We have an adjunction between the forgetful functor $U$ (from the category  $\mathcal M$ to the category of chain complexes) and the free $\Sigma_*$-module functor $\Sigma_* \t -$.

The transfer process of Section \ref{transf} gives us a model structure on $\Sigma_*$-modules where weak equivalences are morphisms whose all components are weak equivalences of dg-modules and where fibrations are morphisms whose all components are epimorphisms of dg-modules. Cofibrations are given by the LLP with respect to acyclic fibrations.
Again, we can say more precisely which maps are cofibrations (cf. \cite[Prop. 11.4.A]{BouquinBenoit}). The generating cofibrations are given by tensor products 
$$i \t F_r : C \t F_r \fdr D \t F_r$$
where $i : C \fdr D$ ranges over the generating cofibrations of dg-modules and 
$F_r, r\in\NN$ denote the $\Sigma_*$-modules such that 
$$F_r(n)= \left \{ \begin{array}{rl} \KK[\Sigma_r], & \text{for $n=r,$} \\ 0, & \text{otherwise.} \end{array} \right. $$

We will use the composition product $\circ$ of $\Sigma_*$-modules.
Recall that for a constant $\Sigma_*$-module $N$ (such that $N(0)=C$ and $N(r)=0$ for $r>0$), the composition $M \circ C$ represents the application of a symmetric functor with coefficients in $M$ to $C$:
$$M \circ C = \bigoplus_{r=1}^{+\infty} (M(r) \t C^{\t r})_{\Sigma_r}.$$
This object is denoted by $S(M,C)$ in the book \cite{BouquinBenoit}, but we use the notation $M \circ C$ in the paper.

In general, the composite $M \circ N$ is defined such that the associativity relation $M \circ (N \circ C)= (M \circ N) \circ C$ is satisfied for all constant $\Sigma_*$-modules $C$. The composition product $\circ$ is a monoidal product for the category of $\Sigma_*$-modules.

Recall that an operad is a $\Sigma_*$-module $\PP$ equipped with an initial morphism $\mathsf{I} \fdr \PP$ (where $\mathsf{I}$ is the unit $\Sigma_*$-module, with $\KK$ in arity $1$ and $0$ everywhere else) and a composition product $\gamma : \PP \circ \PP \fdr \PP$. As mentionned in the introduction, we assume that any operad $\PP$ satisfies $\PP(0) = 0$ and $\PP(1) = \KK$, so that the initial morphism of $\PP$ is an isomorphism in arity $0$ and in arity $1$. 
We use the notation $\bar{\PP}$ for the $\Sigma_*$-submodule of $\PP$ formed by the components $\PP(n)$ of arity $n>1$ and trivial in arity $0$ and in arity $1$.

In what follows, we will often consider $\Sigma_*$-cofibrant operads, operads $\PP$ such that the initial morphism $\mathsf{I} \fdr \PP$ is a cofibration of $\Sigma_*$-modules.

\subsection{Model category structure for bimodules over operads}\label{modcatbimod}
Let $\PP$ and $\QQ$ be operads. Let $\PMQ$ be the category of connected (that is $M(0)=0$) $\PP$-$\QQ$-bimodules in the sense of \cite{BouquinBenoit}. We have an adjunction $$\PP \circ - \circ \QQ :  \mathcal M \rightleftarrows \PMQ : U,$$ where $U$ is the forgetful functor.

The transfer process  gives us a semi-model structure on $\PP$-$\QQ$-bimodules, where weak equivalences are morphisms whose all components are weak equivalences of dg-modules and where fibrations are morphisms whose all components are epimorphisms of dg-modules. Cofibrations are given by the LLP with respect to acyclic fibrations.

We now describe a particular class of cofibrant $\PP$-$\QQ$-bimodules that we will use extensively later.

\begin{prop}\label{cofbimod}
Let $\PP$ and $\QQ$ be connected operads and $M$ a cofibrant $\Sigma_*$-module.

The quasi-free $\PP$-$\QQ$-bimodule $(\PP \circ M \circ \QQ, \partial)$ is cofibrant if the differential is decomposable (that is $\partial(M) \subset \bar \PP \circ M \circ \QQ + \PP \circ M \circ \bar \QQ$).
\end{prop}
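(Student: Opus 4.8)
The plan is to show that a quasi-free bimodule with decomposable differential can be built as a (possibly transfinite) colimit of pushouts of generating cofibrations of the semi-model category of $\PP$-$\QQ$-bimodules, hence is cofibrant. Recall from Section~\ref{transf} that the generating cofibrations of $\PMQ$ are the maps $\PP \circ i \circ \QQ : \PP \circ C \circ \QQ \fdr \PP \circ D \circ \QQ$ where $i : C \fdr D$ ranges over the generating cofibrations of $\Sigma_*$-modules; and by Proposition~\ref{modcatsigma}-style description, the latter are the maps $j \t F_r$ with $j : C_n \fdr D_n$ a generating cofibration of dg-modules. So the generating cofibrations of $\PMQ$ are the maps $\PP \circ (C_n \t F_r) \circ \QQ \fdr \PP \circ (D_n \t F_r) \circ \QQ$.

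First I would fix a cellular decomposition of the cofibrant $\Sigma_*$-module $M$: since $M$ is cofibrant, the map $0 \fdr M$ is a retract of a relative cell complex, so (up to passing to a retract, which preserves cofibrancy and which we may absorb at the end) we may assume $M = \colim_\lambda M_\lambda$ is a filtered colimit where each $M_\lambda \fdr M_{\lambda+1}$ is a pushout of a coproduct of generating cofibrations $C_n \t F_r \fdr D_n \t F_r$, and $M_0 = 0$. I would then filter the bimodule $(\PP \circ M \circ \QQ, \partial)$ by the sub-bimodules $N_\lambda$ generated by $M_\lambda$, i.e. $N_\lambda = \PP \circ M_\lambda \circ \QQ$ as a graded object, with the differential restricted from $(\PP \circ M \circ \QQ, \partial)$. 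For this restriction to make sense I must check that $N_\lambda$ is stable under $\partial$: this is exactly where the decomposability hypothesis enters, since $\partial(M_\lambda) \subset \bar\PP \circ M \circ \QQ + \PP \circ M \circ \bar\QQ$, and one has to argue that the ``lower'' generators appearing are forced to lie in $M_\lambda$ rather than all of $M$ — this uses that $\partial$ decreases the homological degree by one and a filtration/induction argument on the cells, together with connectedness of $\PP$ and $\QQ$ (so that $\bar\PP$ and $\bar\QQ$ genuinely raise weight). The point is that each new cell $D_n \t F_r$ is attached along a map whose differential part lands in the bimodule generated by previously attached cells.

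Next I would identify the pushout squares. At a successor stage, $M_{\lambda+1}$ is obtained from $M_\lambda$ by a pushout along $\coprod (C_{n} \t F_{r}) \fdr \coprod (D_{n} \t F_{r})$; applying the left adjoint $\PP \circ - \circ \QQ$, which preserves colimits, gives that the graded bimodule $\PP \circ M_{\lambda+1} \circ \QQ$ is the pushout of $\PP \circ M_\lambda \circ \QQ$ along $\coprod \PP\circ(C_{n} \t F_{r})\circ\QQ \fdr \coprod \PP\circ(D_{n}\t F_{r})\circ\QQ$. I then have to promote this to a pushout of \emph{twisted} (differential) bimodules, i.e. show $(N_{\lambda+1},\partial)$ is the pushout of $(N_\lambda,\partial)$ along the corresponding generating cofibration, where the attaching map $\PP\circ(C_n\t F_r)\circ\QQ \fdr (N_\lambda,\partial)$ is the morphism of dg-bimodules determined by sending the generators $c_{n-1}\t F_r$ to $\partial$ of (lifts of) the generators $d_n \t F_r$ — a well-defined chain map precisely because $\partial^2$ and the twisting relation $\delta\partial+\partial\delta+\partial^2=0$ hold and $\partial(D_n\t F_r)\subset N_\lambda$ by the previous paragraph. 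Since cofibrations are closed under pushout, coproduct, and transfinite composition, it follows that $0 = N_0 \fdr \colim_\lambda N_\lambda = (\PP\circ M\circ\QQ,\partial)$ is a cofibration, i.e. the bimodule is cofibrant; the retract case is handled by closure of cofibrations under retracts.

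\textbf{The main obstacle} I expect is the bookkeeping in the paragraph above on stability of the filtration under $\partial$ and the verification that the attaching maps are genuine chain maps into $(N_\lambda,\partial)$ — in other words, making precise that decomposability of $\partial$ together with the degree shift and the weight-grading coming from connectedness of $\PP,\QQ$ forces $\partial$ of a cell to be expressible using strictly earlier cells. One should set up the indexing of cells carefully (e.g. order first by homological degree, then by the cellular filtration of $M$) so that a clean induction is available; once that is in place, the rest is the standard ``quasi-free objects built cell-by-cell are cofibrant'' argument, which is routine in this model-categorical setting (and parallel to the analogous statement for quasi-free algebras over an operad).
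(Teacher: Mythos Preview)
Your overall plan (exhibit the bimodule as a transfinite composite of pushouts of generating cofibrations) is the same as the paper's, and you have correctly identified that the whole proof hinges on choosing a filtration of $\PP\circ M\circ\QQ$ compatible with $\partial$. But the filtration you propose is the wrong one: ordering the cells of $M$ by homological degree does not force $\partial$ of a cell to land in the sub-bimodule generated by earlier cells. In the $\ZZ$-graded dg-setting of the paper, $\PP$ and $\QQ$ may have elements of arbitrary degree, so the $M$-factors occurring in $\partial(m)$ can have degree $\geq\deg(m)$; decomposability says nothing about degrees. What decomposability together with \emph{connectedness} of $\PP,\QQ$ does control is the \emph{arity} of the $M$-factors: in any term of $\bar\PP\circ M\circ\QQ$ or $\PP\circ M\circ\bar\QQ$ of total arity $n$, each $M$-factor has arity strictly less than $n$, because $\bar\PP$ and $\bar\QQ$ are concentrated in arities $\geq 2$. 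You allude to this yourself (``$\bar\PP$ and $\bar\QQ$ genuinely raise weight'') but then make degree, not arity, your primary index --- that is the gap.

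The paper's proof uses exactly the arity filtration. It sets $ar_\lambda M(n)=M(n)$ for $n\le\lambda$ and $0$ otherwise, observes $\partial(ar_\lambda M)\subset ar_{\lambda-1}(\PP\circ M\circ\QQ)$, and then shows each step $ar_{\lambda-1}\to ar_\lambda$ is a pushout of a cofibration (this is where cofibrancy of $M$ as a $\Sigma_*$-module is used, to ensure the new piece in arity $\lambda$ is attached along a cofibration). The paper also decomposes the argument into two stages rather than treating the bimodule at once: first prove $(M\circ\QQ,\partial)$ is a cofibrant right $\QQ$-module via the arity filtration (Lemma~\ref{cofbimod1}), then prove $(\PP\circ N,\partial)$ is a cofibrant $\PP$-$\QQ$-bimodule for $N=(M\circ\QQ,\partial)$ by the same device on the left (Lemma~\ref{cofbimod2}). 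If you replace your degree-ordering by an arity-ordering of the cells of $M$ (which is harmless since each generating $\Sigma_*$-cofibration $j\otimes F_r$ lives in a single arity), your argument collapses to the paper's.
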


This result will be deduced from the following lemmas.
\begin{sublemm}\label{cofbimod1}
Let $\QQ$ be a connected operad and $M$ a cofibrant $\Sigma_*$-module.

The quasi-free $\QQ$-module $(M \circ \QQ, \partial)$ is cofibrant if the differential is decomposable (that is $\partial (M) \subset M \circ \bar \QQ$).
\end{sublemm}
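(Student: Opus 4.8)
The plan is to verify directly that $(M \circ \QQ, \partial)$ has the left lifting property with respect to acyclic fibrations of right $\QQ$-modules. Since the initial right $\QQ$-module is $0 = 0 \circ \QQ$, and since by the transfer of Section~\ref{transf} a morphism of right $\QQ$-modules is a cofibration precisely when it has the LLP with respect to acyclic fibrations, this is exactly the assertion that $(M \circ \QQ, \partial)$ is cofibrant. First I would reduce to the case where $M$ is a cell $\Sigma_*$-module, say $M = \colim_\lambda M_\lambda$ built from $0$ by successively attaching generating cofibrations $C_n \t F_r \fdr D_n \t F_r$. The general cofibrant case follows by a retract argument: such an $M$ is a retract of a cell $\Sigma_*$-module $\widetilde M$, with data $M \xrightarrow{s} \widetilde M \xrightarrow{r} M$, and the right-$\QQ$-linear extension of $\widetilde M \xrightarrow{r} M \xrightarrow{\partial} M \circ \bar \QQ \xrightarrow{s \circ \mathrm{id}} \widetilde M \circ \bar \QQ$ is a decomposable twisting $\widetilde\partial$ (the identity $(\delta+\widetilde\partial)^2 = 0$ uses $rs = \mathrm{id}$) for which $(M \circ \QQ, \partial)$ becomes a retract of $(\widetilde M \circ \QQ, \widetilde\partial)$ in right $\QQ$-modules; retracts of cofibrant objects are cofibrant.

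For the cell case I would exploit the free--forgetful adjunction: a morphism of right $\QQ$-modules out of the quasi-free module $(M \circ \QQ, \partial)$ amounts to a morphism of graded $\Sigma_*$-modules $f : M \to A$ satisfying the compatibility relation $\delta_A f(m) = f(\delta_M m) + \widehat f(\partial m)$ for $m \in M$, where $\widehat f$ is the right-$\QQ$-linear extension of $f$. So, given an acyclic fibration $p : A \fdr B$ of right $\QQ$-modules and a lifting problem over it, I would build the lift $f : M \to A$ by transfinite induction along the cell filtration of $M$. At a successor stage one has to extend $f$ over a single new generating cell, i.e.\ to choose the image in $A$ of the one new generator (sitting in some degree $n$, arity $r$) so that it maps to the prescribed element $b \in B$ and realises the element $f(\delta_M m) + \widehat f(\partial m)$ as its differential in $A$.

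The crux --- and the point where the decomposability hypothesis $\partial(M) \subset M \circ \bar \QQ$ is used --- is that this extension is always possible. Since $\partial$ lowers degree by one and lands in $M \circ \bar \QQ$, the element $\partial m$ attached to a new generator $m$ of degree $n$ only involves generators of $M$ of degree $\le n-1$ composed with elements of $\bar \QQ$, hence lies in the part of $M \circ \QQ$ already handled at earlier stages; so $f(\delta_M m) + \widehat f(\partial m)$ is a well-defined element of $A$. A computation with the relation $\delta \circ \partial + \partial \circ \delta + \partial^2 = 0$ and the inductive hypothesis then shows this element is a cycle whose image in $B$ equals the differential of $b$, and since $p$ is surjective with acyclic kernel one obtains the required value of $f(m)$ by the standard lifting against $C_n \fdr D_n$ performed inside $\ker p$. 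Taking colimits at limit stages yields $f$, hence the LLP, so $(M \circ \QQ, \partial)$ is cofibrant. I expect the genuinely delicate step to be exactly this verification that the prescribed differential value is a cycle mapping to $\delta_B b$; the rest is bookkeeping of the cellular induction, and this lemma is then one of the ingredients of Proposition~\ref{cofbimod}.
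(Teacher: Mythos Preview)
Your direct LLP strategy and the retract reduction are fine, but the inductive step has a real gap. You assert that for a new cell generator $m$ of degree $n$, the element $\partial m \in M \circ \bar\QQ$ ``only involves generators of $M$ of degree $\le n-1$''. That would follow if $\bar\QQ$ were concentrated in non-negative degrees, but the paper works over $\ZZ$-graded dg-modules and imposes no degree hypothesis on $\QQ$: if $\bar\QQ$ has elements of negative degree, the $M$-component of a summand of $\partial m$ can sit in degree $>n-1$. More fundamentally, the decomposability condition $\partial(M)\subset M\circ\bar\QQ$ says nothing about \emph{which cells} of $M$ occur in $\partial m$; an arbitrary cellular filtration of $M$ has no reason to be stable under $\partial$, so ``$\widehat f(\partial m)$ is already defined'' is unjustified at the stage where you need it.

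The paper's proof repairs exactly this point by filtering by \emph{arity} rather than by degree or cell index. For $m\in M(n)$, decomposability together with connectedness of $\QQ$ (every factor coming from $\bar\QQ$ has arity $\ge 2$) forces each summand $m'(q_1,\dots,q_k)$ of $\partial m$ to satisfy $k<n$; hence $\partial\bigl((ar_\lambda M)\circ\QQ\bigr)\subset (ar_{\lambda-1}M)\circ\QQ$. Each stage $(ar_{\lambda-1}M)\circ\QQ \hookrightarrow (ar_\lambda M)\circ\QQ$ is then a pushout of a free map on a cofibration of $\Sigma_*$-modules (using cofibrancy of $M(\lambda)$), and the colimit gives the result. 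If you rerun your LLP induction along this arity filtration instead of a degree/cell filtration, the argument goes through and becomes essentially the paper's.
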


\begin{proof}
The complex $(M \circ \QQ, \partial)$ is filtered by 
$$ar_\lambda(M \circ \QQ, \partial) = (ar_\lambda M \circ \QQ, \partial) $$
where $ar_\lambda M(n) = M(n)$ if $n \leq \lambda$ and $0$ otherwise, and where the differential  is just the restriction of the differential on $(M \circ \QQ, \partial)$.

Note that $\partial (ar_\lambda M) \subset ar_{\lambda-1}(M \circ \QQ)$.

We have the following pushout of right $\QQ$-modules:
\begin{equation*}\begin{array}{c}
 $\xymatrix@M=8pt@C=30pt@R=30pt{
(\partial M(n) \circ \QQ, 0) \ar[d] \ar[r] & (ar_{n-1} M \circ \QQ, \partial) \ar[d]\\
(\partial M(n) \circ \QQ \bigoplus M(n) \circ \QQ, \partial) \ar[r] & (ar_n M \circ \QQ, \partial) 
}$\end{array}
\end{equation*}

The arrow on the left is a generating cofibration. Thus the arrow on the right is a cofibration too.

Thus $\displaystyle{(M \circ \QQ, \partial) = \colim _\lambda ar_\lambda(M \circ \QQ, \partial)}$ is a cofibrant right $\QQ$-module.
\end{proof}

\begin{sublemm}\label{cofbimod2}
Let $N = (M \circ \QQ, \partial)$ be a right $\QQ$-module with the hypothesis of the above lemma.
Let $\PP$ be a connected operad.

The quasi-free $\PP$-$\QQ$-bimodule $(\PP \circ N, \partial)$ is cofibrant if the differential is decomposable (that is $\partial (N) \subset \bar\PP \circ N$).
\end{sublemm}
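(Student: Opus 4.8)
The plan is to adapt the proof of Lemma \ref{cofbimod1}: we filter $\PP \circ N$ by sub-$\PP$-$\QQ$-bimodules indexed by the arity of $M$ and identify the successive inclusions as pushouts of generating cofibrations of $\PP$-$\QQ$-bimodules. The only genuinely new point, compared with Lemma \ref{cofbimod1}, is that the $\bar\PP$-part of the differential has to be shown compatible with this filtration, and this is the place where the connectedness of $\PP$ is used.

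Keep the filtration $ar_\lambda N := ar_\lambda M \circ \QQ$ of $N$ from the proof of Lemma \ref{cofbimod1} (so that $N = \colim_\lambda ar_\lambda N$ and $ar_0 N = 0$ since $M$ is connected), and set $F_\lambda := \PP \circ ar_\lambda M \circ \QQ$. Each $F_\lambda$ is a sub-$\PP$-$\QQ$-bimodule of the underlying module of $\PP \circ N$, generated as such by $ar_\lambda M$, and $F_0 = 0$. The first step is to check that the differential of $(\PP \circ N, \partial)$ preserves $F_\lambda$; since it is a bimodule derivation it suffices to check that it sends the generators $M(j)$ into $F_j$, for every $j$. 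Its free part (extending the internal differentials of $M$, $\QQ$, $\PP$ and the operad compositions) keeps $M(j)$ inside $M(j)$; the twisting coming from the right $\QQ$-module structure of $N$ sends $M(j)$ into $ar_{j-1} M \circ \QQ \subseteq F_{j-1}$, exactly as in Lemma \ref{cofbimod1} (a component of $(M \circ \bar\QQ)(j)$ involves only the $M(i)$ with $i \leq j/2$); and the remaining twisting $\partial$, which by hypothesis maps $M(j) \subseteq N(j)$ into $\bar\PP \circ N$, lands in $F_{j-1}$ by the following arity count. A nonzero component of $(\bar\PP \circ N)(j)$ has the form $\mu \otimes (x_1 \otimes \cdots \otimes x_r)$ with $\mu \in \bar\PP(r)$; since $\PP(1) = \KK$ we have $\bar\PP(1) = 0$, hence $r \geq 2$, and since $N(0) = 0$ each $x_i$ has arity $\geq 1$; as these arities sum to $j$ over $r \geq 2$ factors, each is at most $j-1$, so the component lies in $\bar\PP \circ (ar_{j-1} M \circ \QQ) \subseteq F_{j-1}$, using that the components of $M \circ \QQ$ of arity $\leq j-1$ involve only the $M(i)$ with $i \leq j-1$. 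Altogether the differential sends $M(j)$ into $M(j) + F_{j-1} \subseteq F_j$, so it preserves each $F_\lambda$.

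It remains to see that $(F_{\lambda-1}, \partial) \hookrightarrow (F_\lambda, \partial)$ is a cofibration of $\PP$-$\QQ$-bimodules. As in the proof of Lemma \ref{cofbimod1}, there is a pushout square of $\PP$-$\QQ$-bimodules
$$\begin{array}{ccc}
(\PP \circ \partial M(\lambda) \circ \QQ,\, 0) & \longrightarrow & (F_{\lambda-1},\, \partial) \\
\downarrow & & \downarrow \\
(\PP \circ (\partial M(\lambda) \oplus M(\lambda)) \circ \QQ,\, \partial) & \longrightarrow & (F_\lambda,\, \partial)
\end{array}$$
in which the top map sends the formal generators $\partial M(\lambda)$ to the twisting $\partial(M(\lambda))$, which lies in $F_{\lambda-1}$ by the first step, and the left vertical map is obtained by applying the left adjoint $\PP \circ (-) \circ \QQ$ to the evident cofibration of $\Sigma_*$-modules $\partial M(\lambda) \hookrightarrow (\partial M(\lambda) \oplus M(\lambda), \partial)$; the latter is a retract of a coproduct of generating cofibrations of $\Sigma_*$-modules ($M(\lambda)$ being projective and concentrated in arity $\lambda$), so its image is a cofibration of $\PP$-$\QQ$-bimodules, and hence the left, and therefore the right, vertical map is a cofibration. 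Finally $(\PP \circ N, \partial) = \colim_\lambda (F_\lambda, \partial)$ with $F_0 = 0$, so $0 \to (\PP \circ N, \partial)$ is a transfinite composite of cofibrations of $\PP$-$\QQ$-bimodules and thus a cofibration: $(\PP \circ N, \partial)$ is cofibrant. The one real difficulty is the arity bookkeeping of the previous paragraph, which is the only place where the connectedness of $\PP$ (and of $N$) is used; the rest is a routine transcription of the argument for Lemma \ref{cofbimod1}, with right $\QQ$-modules replaced by $\PP$-$\QQ$-bimodules.
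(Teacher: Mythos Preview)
Your proof is correct and follows essentially the same approach as the paper's: filter $\PP\circ N$ by $\PP\circ ar_\lambda N$ (with $ar_\lambda N = ar_\lambda M\circ\QQ$), check that the twisting differential drops the filtration degree, and exhibit each inclusion as a pushout of a generating cofibration. The paper's argument is compressed to a few lines (``using a similar argument as in the above proof''), whereas you spell out the arity count showing $\partial(M(j))\subset F_{j-1}$ and the role of the connectedness of $\PP$ and $N$ --- this is exactly the content behind the paper's unproven claim that $\partial(ar_\lambda N)\subset \bar\PP\circ ar_{\lambda-1}N$.
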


\begin{proof}
First, note that  $\partial (ar_\lambda N) \subset \bar \PP \circ ar_{\lambda -1} N$.
Therefore we can define a filtration by $ar_\lambda (\PP \circ N, \partial) = (\PP \circ ar_\lambda N, \partial)$.

Note that $\partial (ar_\lambda N) \subset ar_{\lambda-1}(\PP \circ N)$.
Using a similar argument as in the above proof, the obvious arrow $ar_{\lambda-1}(\PP \circ N, \partial) \fdr ar_\lambda(\PP \circ N, \partial)$ is a cofibration of $\PP$-$\QQ$-bimodules.

Thus $\displaystyle{(\PP \circ N, \partial) = \colim_\lambda ar_\lambda(\PP \circ N, \partial)}$ is a cofibrant $\PP$-$\QQ$-bimodule. 
\end{proof}
The combination of these two lemmas proves Proposition \ref{cofbimod}. \qed

\subsection{Model category structure for algebras over an operad}
We have an adjunction between the forgetful functor $U$ from $\PP$-algebras to dg-modules and the free $\PP$-algebra functor $\PP \circ -$.

If $\PP$ is $\Sigma_*$-cofibrant, the transfer process of Section \ref{transf} gives us a semi-model category on $\PP$-algebras, where weak equivalences are morphisms which are weak equivalences of dg-modules and where fibrations are morphisms which are epimorphisms of dg-modules. Cofibrations are given by the LLP with respect to acyclic fibrations.

The model category structure allows us to define the cofibrant replacement of a $\PP$-algebra $A$. It is a cofibrant $\PP$-algebra $Q_A$ such that we have a weak equivalence of $\PP$-algebras $Q_A \we A$.

If we are given a cofibrant replacement $_\PP Res_\PP \we \PP$ in the category of $\PP$-bimodules, we can easily make explicit a  cofibrant replacement of a $\PP$-algebra A.

First we need to recall the definition of the relative composition product of $\PP$-modules. Suppose that $M$ is a right $\PP$-module and $A$ a $\PP$-algebra. We denote by $M \circ_P A$ the quotient of $M \circ A$ coequalizing the right action of $\PP$ on $M$ and the left action of $\PP$ on $A$. When $M$ is a $\PP$-bimodule, the relative composite $M \circ_P A$ inherits a $\PP$-algebra structure.

We can now give the result

\begin{sublemm}\label{RempCofib}
 We get a cofibrant replacement $(_\PP Res_\PP \circ_\PP A, \partial')$ of $A$ in the category of $\PP$-algebras, with $\partial'=\partial \circ_\PP A.$ 
\end{sublemm}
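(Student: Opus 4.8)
The plan is to verify the two properties that characterize a cofibrant replacement: that $(_\PP Res_\PP \circ_\PP A, \partial')$ is cofibrant as a $\PP$-algebra, and that the natural map $(_\PP Res_\PP \circ_\PP A, \partial') \we A$ induced by $_\PP Res_\PP \we \PP$ is a weak equivalence of $\PP$-algebras.

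First I would treat cofibrancy. Since $_\PP Res_\PP \we \PP$ is a cofibrant replacement of $\PP$ as a $\PP$-bimodule, and by Proposition \ref{cofbimod} we may assume $_\PP Res_\PP$ has the form of a quasi-free bimodule $(\PP \circ M \circ \PP, \partial)$ with decomposable differential and $M$ a cofibrant $\Sigma_*$-module. Then the relative composite unfolds as $_\PP Res_\PP \circ_\PP A = (\PP \circ M \circ \PP) \circ_\PP A = \PP \circ M \circ A = \PP \circ (M \circ A)$, so $(_\PP Res_\PP \circ_\PP A, \partial')$ is a quasi-free $\PP$-algebra on the dg-module $M \circ A$, with differential $\partial' = \partial \circ_\PP A$ still decomposable (the relative composition product sends $\bar\PP \circ M \circ \PP$ to $\bar\PP \circ (M\circ A)$ and $\PP\circ M\circ\bar\PP$ into the part of the differential absorbed by the $\PP$-algebra structure on $A$). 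A filtration argument entirely parallel to the proofs of Lemmas \ref{cofbimod1} and \ref{cofbimod2} — filtering by the arity of $M$, noting $\partial'(ar_\lambda M) \subset \PP \circ ar_{\lambda-1}M \circ A$ plus lower terms, and exhibiting each stage as a pushout of a generating cofibration of $\PP$-algebras — then shows quasi-freeness with decomposable differential implies cofibrancy. Alternatively, and more cleanly, I would argue functorially: the functor $- \circ_\PP A : \PMP \to {}_\PP\mathrm{alg}$ is left adjoint (to $A \mapsto$ the appropriate endomorphism-type bimodule, or simply because it is a left adjoint composite of the base-change left adjoints), hence preserves cofibrations and cofibrant objects, so the image of the cofibrant bimodule $_\PP Res_\PP$ is a cofibrant $\PP$-algebra.

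Next I would address the weak equivalence. The map in question is $_\PP Res_\PP \circ_\PP A \to \PP \circ_\PP A = A$. The point is that $\circ_\PP A$ applied to the weak equivalence $_\PP Res_\PP \we \PP$ stays a weak equivalence: since $_\PP Res_\PP$ is $\Sigma_*$-cofibrant (being a quasi-free bimodule on a cofibrant $\Sigma_*$-module) and $\PP$ is $\Sigma_*$-cofibrant, and $A$ has cofibrant underlying dg-module by the standing convention, the relative composition product $- \circ_\PP -$ is a homotopy-invariant construction on such objects. Concretely one filters both sides by arity of $M$ (resp. of $\bar\PP$) and uses that $M(r)\t A^{\t r}$ computes the correct derived functor because $M(r)$ is $\Sigma_r$-cofibrant and $A$ is cofibrant over $\KK$; comparing the associated graded pieces term by term, each comparison map is a quasi-isomorphism, and a spectral sequence or colimit argument then gives that the total map is a quasi-isomorphism.

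The main obstacle I anticipate is the homotopy-invariance of the relative composition product $- \circ_\PP A$ needed for the second part: one must be careful that taking coinvariants under the symmetric groups and the relative coequalizer over $\PP$ does not destroy quasi-isomorphisms, which is exactly why the $\Sigma_*$-cofibrancy of $_\PP Res_\PP$ and $\PP$ and the cofibrancy of the underlying dg-module of $A$ are invoked. Once one has set up the bar-type or arity filtration so that the associated graded is built from free pieces $M(r)\t_{\Sigma_r} A^{\t r}$, the comparison reduces to the standard fact that for a cofibrant $\Sigma_r$-module $M(r)$ and a $\KK$-cofibrant $A$ the functor $M(r)\t_{\Sigma_r}(-)^{\t r}$ preserves quasi-isomorphisms; the rest is a routine assembly. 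The first part (cofibrancy) is largely bookkeeping once one observes $_\PP Res_\PP \circ_\PP A = \PP\circ(M\circ A)$ and reuses the filtration technique of Lemmas \ref{cofbimod1} and \ref{cofbimod2}.
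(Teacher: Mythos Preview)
Your treatment of cofibrancy matches the paper exactly: both reduce to the filtration argument of Lemma~\ref{cofbimod1}, after writing $_\PP Res_\PP \circ_\PP A = \PP \circ (M \circ A)$ as a quasi-free $\PP$-algebra with decomposable differential. Your alternative via ``$-\circ_\PP A$ is a left Quillen functor'' is not in the paper but is a perfectly valid shortcut.

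For the weak equivalence, the paper takes a cleaner route than yours. Rather than appealing to $\Sigma_*$-cofibrancy and an ad hoc arity filtration, it observes that both $_\PP Res_\PP$ and $\PP$ are cofibrant \emph{as right $\PP$-modules} (the former because a cofibrant $\PP$-bimodule is in particular cofibrant on the right; the latter because $\PP = \mathsf I \circ \PP$ is free), and then invokes the general fact (\cite[Theorem~15.1.A]{BouquinBenoit}) that the functor $-\circ_\PP A$ preserves weak equivalences between cofibrant right $\PP$-modules. This immediately gives $_\PP Res_\PP \circ_\PP A \we \PP \circ_\PP A = A$. Your approach is in spirit a re-derivation of that theorem, but the sketch you give is not quite right as stated: you propose to ``filter both sides by arity of $M$ (resp.\ of $\bar\PP$)'', yet the target $A = \PP\circ_\PP A$ carries no such filtration compatible with the source, and the comparison of associated graded pieces you describe does not line up with the map $_\PP Res_\PP \circ_\PP A \to A$. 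The right invariant to isolate is cofibrancy in right $\PP$-modules, not $\Sigma_*$-cofibrancy of $M$; once you have that, the preservation statement is a black box and no spectral sequence is needed.
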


\begin{proof}
The $\PP$-algebra $(_\PP Res_\PP \circ_\PP A, \partial')$ is cofibrant, following the same argument of the proof of Lemma \ref{cofbimod1}.
The $\PP$-bimodule $_\PP Res_\PP$ is cofibrant, thus it is cofibrant as a right $\PP$-module. The operad $\PP$ is also cofibrant as a right $\PP$-module. As the functor $- \circ_\PP A$ preserve weak equivalences between cofibrant objects (cf. \cite[Theorem 15.1.A]{BouquinBenoit}), we get that $(_\PP Res_\PP \circ_\PP A, \partial')$ is a cofibrant replacement of $\PP \circ_\PP A$. But $\PP \circ_\PP A = A$, thus $(_\PP Res_\PP \circ_\PP A, \partial')$ is a cofibrant replacement of $A$ in the category of $\PP$-algebras.
Explicitely, the differential $\partial'$ is given by $\partial'(m \circ (a_1,\ldots, a_n)) = (\partial(m)) \circ (a_1, \ldots, a_n)$ where $m$ lies in $_\PP Res_\PP$ and $\partial(m)$ in $\PP \circ ({}_\PP Res_\PP) \circ \PP$. Note that we use the structure of $\PP$-algebra of $A$ on the right hand side to get an element of $_\PP Res_\PP \circ_\PP A$.
\end{proof}

\vspace{2cm}

\section{Gamma-homology of $\PP$-algebras}
In this section, we recall the definition of the homology of $\QQ$-algebras for $\QQ$ a $\Sigma_*$-cofibrant operad. In the differential graded setting over a ring of characteristic $0$, homology with trivial coefficients was defined by Getzler and Jones in \cite{GJ} and homology with coefficients was defined by Balavoine in \cite{Balav}. The extension to any category of dg-modules can be found in \cite{Hinich}. We adopt conventions of \cite{BouquinBenoit} where these notions are reviewed.
We define $\Gamma$-homology of $\PP$-algebras for any operad $\PP$, using bimodule resolutions. Then we prove the identity $H_\QQ = H\Gamma^*_\PP$ when $\QQ$ is a $\Sigma_*$-cofibrant replacement of $\PP$.

\subsection{Recollections on homology of $\QQ$-algebras}\label{Qhomol}
We refer the reader to Section 4 of \cite{BouquinBenoit} for the first definitions.

Let $\QQ$ be a $\Sigma_*$-cofibrant operad, $B$ an algebra over $\QQ$.

We denote by $U_{\QQ}(B)$ the enveloping algebra of $B$ and by $\Omega_{\QQ}(B)$ the module of K\"ahler differentials of $B$. 

The enveloping algebra $U_{\QQ}(B)$ is spanned by elements $q(\diamond, b_1, \ldots, b_n)$, where $q \in \QQ(n+1)$,  $b_1, \ldots, b_n \in B$ and the symbol $\diamond$ denotes a free input, divided out by the relations
$$p(\diamond, b_1, \ldots, b_{i-1}, q(b_i, \ldots, b_n), b_{n+1}, \ldots, b_m)=p \circ_{i+1} q (\diamond, b_1, \ldots, b_{i-1}, b_i, \ldots, b_m).$$

The product is given by 
$$p(\diamond, a_1, \ldots, a_n) . q(\diamond, b_1, \ldots, b_m) = p \circ_1 q (\diamond, b_1, \ldots, b_m, a_{1}, \ldots, a_{n}).$$
We can put $\diamond$ at any place since the action of $\Sigma_{n+1}$ on $\QQ(n+1)$ allows us to permute the inputs of any operation $q \in \QQ(n+1)$.

We represent graphically an element $q(\diamond, b_1, \ldots, b_n)$ by 
$$\vcenter{\xymatrix@M=3pt@H=4pt@W=3pt@R=8pt@C=4pt{
\diamond\ar@{-}[drr] & b_1\ar@{-}[dr]\ar@{.}[rr] &&& b_n\ar@{-}[dll] \\
&& q\ar@{-}[d] && \\
&&}  }.$$

The module of K\"ahler differentials $\Omega_{\QQ}(B)$ is a left module over $U_\QQ(B)$ such that 
$$Hom_{U_\QQ(B)}(\Omega_{\QQ}(B), F) = \textrm{Der}_\QQ(B,F) $$
for all left modules $F$ over $U_\QQ(B)$, where $\textrm{Der}_\QQ(B,F)$ denotes the dg-module of $\QQ$-derivations $B \fdr F$ (not necessarily preserving the degree) and where $Hom_{U_\QQ(B)}(\Omega_{\QQ}(B), F)$ is the dg-module of homomorphisms of left $U_\QQ(B)$-modules between $\Omega_{\QQ}(B)$ and $F$.

The module of K\"ahler differentials $\Omega_{\QQ}(B)$ can be seen as the dg-module spanned by elements $q (b_1, \ldots, d b_i, \ldots, b_n)$, where $q \in \QQ(n)$, $b_1, \ldots, b_n \in B$ and $d$ denotes a formal differentiation symbol, divided out by the relations
\begin{multline*}
p(b_1, \ldots, q(b_i, \ldots, b_n), b_{n+1}, \ldots, db_j, \ldots, b_m) \\
= p \circ_i q (b_1, \ldots, b_i, \ldots, b_n, \ldots, db_j, \ldots, b_m), \text{for } i\neq j, 
\end{multline*}
\begin{multline*}
p(b_1, \ldots, dq(b_i, \ldots, b_n), b_{n+1}, \ldots, \ldots, b_m) \\
= \sum_{j=i}^{n} p \circ_i q  (b_1, \ldots, b_i, \ldots, db_j, \ldots, b_n, \ldots, b_m).
\end{multline*}

Let us now define the homology and the cohomology of an algebra over~$\QQ$.

We choose $Q_B$ a cofibrant replacement of $B$. Let $E$ be a right $U_\QQ(Q_B)$-module and $F$ be a left $U_\QQ(Q_B)$-module.

The homology of $B$ as a $\QQ$-algebra with coefficients in $E$ is defined by $H_*^\QQ(B,E)= H_*(E \t_{U_{\QQ}(Q_B)} \Omega_{\QQ}(Q_B))$.
In a similar way, the cohomology of $B$ is defined by $H^*_\QQ (B,F)= H^*(Hom_{U_\QQ(Q_B)}(\Omega_{\QQ}(Q_B), F))$.

\vspace{1cm}
We will use the following lemma to reduce the complex appearing in the calculation of the homology and the cohomology.

\begin{sublemm}\label{lemmreduc}
If $Q_A$ is a quasi-free $\QQ$-algebra $Q_A=(\QQ(C), \partial')$, then we have an isomorphism of left $U_\QQ(Q_A)$-modules 
$$(U_\QQ(Q_A) \t C, \partial'') \simeq \Omega_\QQ (Q_A) $$
where the differential $\partial'' : U_\QQ(Q_A) \t C \fdr U_\QQ(Q_A) \t C$ is a twisting homomorphism on $U_\QQ(Q_A) \t C$-modules induced by the action of the twisting derivation of $Q_A$ on $U_\QQ(Q_A) \t C$ (see the detailed representation in  Figure~\ref{figdiff}).
\end{sublemm}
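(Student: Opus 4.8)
The plan is to exploit the universal property of K\"ahler differentials recalled above, namely $\Hom_{U_\QQ(Q_A)}(\Omega_\QQ(Q_A),F) = \Der_\QQ(Q_A,F)$, and to establish the corresponding property for the candidate module $(U_\QQ(Q_A)\t C,\partial'')$. First I would treat the underlying ungraded (or rather, untwisted) situation: for a genuinely free $\QQ$-algebra $\QQ(C)$ one has the classical isomorphism $\Omega_\QQ(\QQ(C)) \cong U_\QQ(\QQ(C))\t C$, which comes from the fact that a $\QQ$-derivation out of a free algebra is uniquely determined by its restriction to the generators $C$, so that $\Der_\QQ(\QQ(C),F) = \Hom_\mathcal C(C,F) = \Hom_{U_\QQ(\QQ(C))}(U_\QQ(\QQ(C))\t C,F)$. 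This is essentially the operadic analogue of the statement that the K\"ahler differentials of a polynomial algebra form a free module on the $dx_i$; I would cite or reproduce it from \cite{BouquinBenoit}.

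Next I would account for the twisting. The point is that $Q_A=(\QQ(C),\partial')$ differs from the free algebra $\QQ(C)$ only by a twisting derivation $\partial'$, and the functors $U_\QQ(-)$ and $\Omega_\QQ(-)$ are compatible with such twistings: the derivation $\partial'$ on $Q_A$ induces a derivation on $U_\QQ(Q_A)$ and hence, via Leibniz, a twisting homomorphism $\partial''$ on $U_\QQ(Q_A)\t C$ making the classical isomorphism above into an isomorphism of twisted modules. Concretely, $\partial''$ is the sum of the differential induced on $U_\QQ(Q_A)\t C$ by $\partial'$ acting through the enveloping algebra and the term obtained by differentiating the formal symbol $dc$, the latter being read off from $\partial'(c)\in\QQ(C)$ by applying the formal differentiation rules for $\Omega_\QQ$ displayed earlier. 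I would verify that the underlying graded isomorphism intertwines these two differentials, i.e. is a chain map, which reduces to checking it on generators $c\in C$ and then propagating by the $U_\QQ(Q_A)$-linearity and the Leibniz rule; Figure~\ref{figdiff} is meant to record exactly this computation.

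I would then assemble the two ingredients: the graded isomorphism $U_\QQ(Q_A)\t C \cong \Omega_\QQ(Q_A)$ of left $U_\QQ(Q_A)$-modules from the free case, together with the compatibility of both sides with the twisting derivation, gives the claimed isomorphism $(U_\QQ(Q_A)\t C,\partial'')\simeq\Omega_\QQ(Q_A)$ of twisted left $U_\QQ(Q_A)$-modules. The main obstacle I anticipate is bookkeeping rather than conceptual: one must be careful about signs and about the precise form of $\partial''$, in particular the interaction between the internal differential $\delta$, the twisting $\partial'$ acting on the $U_\QQ(Q_A)$ factor (which itself involves the enveloping-algebra relations), and the formal term coming from $d(\partial' c)$. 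Getting the relation $\delta\partial''+\partial''\delta+(\partial'')^2=0$ to hold and agreeing with the twisting on $\Omega_\QQ(Q_A)$ is where the care is needed; once the isomorphism is checked on the generating set $C$, $U_\QQ(Q_A)$-linearity forces it everywhere, so the verification is finite in nature and I would relegate the detailed sign chase to Figure~\ref{figdiff} and the surrounding discussion.
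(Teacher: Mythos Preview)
Your proposal is correct and follows essentially the same route as the paper: first establish the isomorphism $U_\QQ(\QQ(C))\t C \cong \Omega_\QQ(\QQ(C))$ in the free case via the chain of identifications $\Der_\QQ(\QQ(C),F)=\Hom_{\mathcal C}(C,F)=\Hom_{U_\QQ(\QQ(C))}(U_\QQ(\QQ(C))\t C,F)$ and Yoneda, then transport the twisting $\partial'$ across this isomorphism by computing $\partial'(\Psi(\omega))$ and splitting it into the term acting through $U_\QQ(Q_A)$ and the term $d(\partial' c)$, exactly as you outline. The paper makes the maps $\Psi$ and $\Psi^{-1}$ explicit before doing the twisting computation, but the structure of the argument is the same as yours.
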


\begin{proof}
We begin to prove the result for a free algebra $Q_A=\QQ(C)$.\\
First, we have $\textrm{Der}_\QQ(\QQ(C),F) = Hom_{\mathcal C}(C,F)$. To prove this identification, we define $\Phi : \textrm{Der}_\QQ(\QQ(C),F) \fdr Hom_{\mathcal C}(C,F)$ by $\Phi(\theta) = \theta_{|C} : C \fdr F$. This is application an isomorphism.
The inverse map $\Phi^{-1}$ associates to any $f : C \fdr F$ the derivation $\theta_f$ such that $\theta_f (q(c_1, \ldots, c_n)) = \pm \sum_i q(c_1, \ldots, f(c_i), \ldots, c_n)$ where the signs are induced by the usual Koszul rule. \\

We have $Hom_{\mathcal C}(C,F) = Hom_{U_\QQ(Q_A)}(U_\QQ(Q_A) \t C,F)$, which gives us $\textrm{Der}_\QQ(Q_A,F) = \textrm{Mor}_{U_\QQ(Q_A)}(U_\QQ(Q_A) \t C,F)$.\\
But $\Omega_{\QQ}(Q_A)$ is defined by $Hom_{U_\QQ(Q_A)}(\Omega_{\QQ}(Q_A), F) = \textrm{Der}_\QQ(Q_A,F)$.
Thus Yoneda's lemma gives us an isomorphism $\Psi$ of $U_\QQ(Q_A)$-modules between  $U_\QQ(Q_A) \t C$ and $\Omega_\QQ (Q_A).$\\

The map $\Psi : U_\QQ(Q_A) \t C \fdr \Omega_\QQ (Q_A)$  associates to the element $q(\diamond, a_1, \ldots, a_n) \t c$ the element $q(dc, a_1, \ldots, a_n)$.

\begin{figure}
\begin{eqnarray*} 
	\vcenter{\xymatrix@M=3pt@H=4pt@W=3pt@R=8pt@C=4pt{
 	c_1 \ar@{-}[dr]\ar@{.}[rr] & \save[]!C.[d]!C *+<36pt,10pt>\frm{}*+<6pt>\frm{(}*+<0pt>\frm{)}
\restore\ar@{}[]!L-<28pt,5pt>;[d]!L-<28pt,5pt>_(0.3){\displaystyle{\partial}} & c_n\ar@{-}[dl] \\
 	& q_0\ar@{-}[drrr] &&& a_1\ar@{-}[d] \ar@{.}[rrr] &&& a_r\ar@{-}[dlll] & \\
	&&&& q\ar@{-}[d] &&&& \\
	&&&&&&&&}}  
& = &  	\sum_{i=1}^n \vcenter{\xymatrix@M=3pt@H=4pt@W=3pt@R=8pt@C=4pt{
 	c_1\ar@{.}[r]\ar@{-}[dr] & dc_i\ar@{.}[r]\ar@{-}[d] & c_n\ar@{-}[dl] \\
&q_0\ar@{-}[dr]& & a_1\ar@{-}[dl] \ar@{.}[r] & a_r\ar@{-}[dll] & \\
	&& q\ar@{-}[d] && \\
	&&&&}}   \\
& = &	\sum_{i=1}^n \vcenter{\xymatrix@M=3pt@H=4pt@W=3pt@R=8pt@C=4pt{
 	c_1\ar@{.}[r]\ar@{-}[drr] & dc_i\ar@{.}[r]\ar@{-}[dr] & c_n\ar@{-}[d] & a_1\ar@{-}[dl] \ar@{.}[r] & a_r\ar@{-}[dll] & \\
	&& q\ar@{-}[d] \circ_1 q_0 && \\
	&&&&}}   \\
& \stackrel{\Psi^{-1}}{\mapsto} & 
	\sum_{i=1}^n \vcenter{\xymatrix@M=3pt@H=4pt@W=3pt@R=8pt@C=4pt{
  	c_1\ar@{.}[r]\ar@{-}[drr] & \diamond \ar@{.}[r]\ar@{-}[dr] & c_n\ar@{-}[d] & a_1\ar@{-}[dl] \ar@{.}[r] & a_r\ar@{-}[dll] & \\
 	&& q\ar@{-}[d] \circ_1 q_0 && \\
	&&&&}} \t c_i 
\end{eqnarray*}
where the empty box is the $i$-th input of the tree.
\caption{\footnotesize{A graphical representation of the inverse isomorphism $\Psi^{-1}$.}}
\label{figPsi} \end{figure}

Its inverse  $\Psi^{-1} : \Omega_\QQ (Q_A) \fdr U_\QQ(Q_A) \t C$ sends \\
$q(d q_0 (\underline{c_0}), a_1, \ldots, a_r) = \sum_i q \circ_1 q_0 (c_1, \ldots, dc_i, \ldots, c_n , a_1, \ldots, a_r)$ \\
to $\sum_i q \circ_1 q_0 (c_1, \ldots, \diamond, \ldots, c_n , a_1, \ldots, a_r) \t c_i$, where $\underline{c_0} = (c_1, \ldots, c_n)$. 

A graphical representation of the isomorphism $\Psi^{-1}$ is given in Figure~\ref{figPsi}.

\vspace{0.5cm}

This morphism $\Psi$ commutes with the internal differential of $C$.

We now consider a quasi-free $\QQ$-algebra $Q_A=(\QQ(C), \partial')$ with a twisting differential $\partial'$ and explain the twisting differential $\partial''$ we obtain on $U_\QQ(Q_A) \t C$.
A graphical representation of the twisting part of the differential is given in Figure~\ref{figdiff}.

\begin{figure}
\begin{eqnarray*} 
	 \partial'' \left(\vcenter{
	\xymatrix@M=3pt@H=4pt@W=3pt@R=8pt@C=4pt{
	\diamond \ar@{-}[drr] & a_1 \ar@{-}[dr]\ar@{.}[rrr] &&& a_n\ar@{-}[dll] \\
	& & q\ar@{-}[d] & & \\
	&&&&}  }  \t c  \right) & 
	\stackrel{(def)}{=} 
	& \Psi^{-1}\left(\vcenter{
	\xymatrix@M=3pt@H=4pt@W=3pt@R=8pt@C=4pt{
	d(\partial c) \ar@{-}[drr] & a_1 \ar@{-}[dr]\ar@{.}[rrr] &&& a_n\ar@{-}[dll] \\
	& & q\ar@{-}[d] & & \\
	&&&&}  } \right)   \\
& = & 
	\Psi^{-1}\left(\vcenter{ 
 	\xymatrix@M=3pt@H=4pt@W=3pt@R=8pt@C=4pt{
 	c''_* \ar@{-}[dr]\ar@{.}[rr] & \save[]!C.[d]!C *+<36pt,10pt>\frm{}*+<6pt>\frm{(}*+<0pt>\frm{)}
\restore\ar@{}[]!L-<30pt,9pt>;[d]!L-<30pt,9pt>_(0.3){\displaystyle{\sum_{\partial'(c)} d }} & c''_*\ar@{-}[dl] \\
 	& q'\ar@{-}[drrr] &&& a_1\ar@{-}[d] \ar@{.}[rrr] &&& a_n\ar@{-}[dlll] & \\
	&&&& q\ar@{-}[d] &&&& \\
	&&&&&&&&}} 
	\right)
\end{eqnarray*}
$$ \text{where } \partial'(c)= 
\sum_{\partial(c)} \vcenter{\xymatrix@M=3pt@H=4pt@W=3pt@R=8pt@C=4pt{
c''_* \ar@{-}[dr]\ar@{.}[rr] && c''_*\ar@{-}[dl] \\
& q'\ar@{-}[d] & \\
&&}} \text{and } a_1, \ldots, a_n \in Q_A .$$

By the identity of Figure~\ref{figPsi}, the last expression can be rewritten to give:
	$$\partial'' \left(\vcenter{
	\xymatrix@M=3pt@H=4pt@W=3pt@R=8pt@C=4pt{
	\diamond \ar@{-}[drr] & a_1 \ar@{-}[dr]\ar@{.}[rrr] &&& a_n\ar@{-}[dll] \\
	& & q\ar@{-}[d] & & \\
	&&&&}  }  \t c  \right) = \sum_{\partial'(c)} \sum \vcenter{ \xymatrix@M=3pt@H=4pt@W=3pt@R=8pt@C=4pt{
	c''_*\ar@{.}[r] \ar@{-}[drr] & \diamond \ar@{-}[dr]\ar@{.}[r] & c''_*\ar@{-}[d] & a_1\ar@{.}[r]\ar@{-}[dl] & a_n\ar@{-}[dll] \\
	& & q \circ_1q'\ar@{-}[d] & & \\
		&&&&}  }   \t c''_* $$ 
\caption{\footnotesize{A graphical representation of the twisting differential in $U_\QQ(Q_A) \t C$.}}
\label{figdiff}  \end{figure}

We consider an element $\omega=q(\diamond, a_1, \ldots, a_n) \t c $ in $U_\QQ(\QQ(C)) \t C$. We compute 

$$\begin{array}{rlclc}
\partial' (\Psi(\omega)) & = & \partial' (q(dc, a_1, \ldots, a_n)) & & \\
& = &\underbrace{q (d\partial' c, a_1, \ldots, a_n)} & + & \sum_i\underbrace{ q(dc, a_1, \ldots, \partial' a_i, \ldots, a_n)}.\\
& & \partial''(\Psi(w)) & & \Psi(q(\diamond,a_1, \ldots, \partial' a_i, \ldots,a_n) \t c)\\
\end{array}$$
The second term of this sum is induced by the action of $\partial' : Q_A \fdr Q_A$. 
The image by $\Psi^{-1}$ of the first term is computed in Figure~\ref{figdiff}. We denote $\Psi^{-1}\partial''(\Psi(\omega))$ by $\partial''(\omega)$. 

To conclude, the twisting differential added to $\delta$ is the sum of $\partial' : Q_A \fdr Q_A$ and of $\partial''$ induced by the action of $\partial'$ on $C$ in $U_\QQ(Q_A) \t C$. 
There are two equivalent ways to see the module $U_\QQ(Q_A) \t C$ with its twisting differential:
$(U_\QQ(Q_A) \t C, \partial'')$ or $(U_\QQ(Q(C)) \t C, \partial'+ \partial'').$

\end{proof}
Note that we have not used the cofibrancy hypothesis on $\QQ$ in the proof of the lemma.

\vspace{1cm}

\subsection{From quasi-free $\QQ$-bimodules to resolutions of an algebra}\label{smallcomplex}

We suppose here that $\QQ$ is a $\Sigma_*$-cofibrant operad. Let $B$ be a $\QQ$-algebra.

Suppose we have a quasi-free $\QQ$-bimodule $(\QQ \circ N \circ \QQ, \partial)$ weakly equivalent to $\QQ$ as $\QQ$-bimodules, with $N$ a cofibrant $\Sigma_*$-module satisfying $N(0)=0$.

Applying Lemma \ref{RempCofib}, we get a cofibrant replacement $(\QQ \circ N \circ B, \partial')$ of $B$ in the category of $\QQ$-algebras, with $\partial'=\partial \circ_\QQ B.$

This particular cofibrant replacement allows us to compute the homology and the cohomology of $B$ as a $\QQ$-algebra using a smaller complex. We first get 
$$H_*^\QQ(B,E) = H_*(E \t_{U_{\QQ}(\QQ \circ N \circ B)} \Omega_{\QQ}(\QQ \circ N \circ B)).$$
By Lemma \ref{lemmreduc} (applied to $C= N \circ B$), this homology is identified to 
$$H_*^\QQ(B,E) = H_*(E \t N \circ B, \partial'')$$
where $\partial''$ is induced by $\partial$ in two steps explained in the proofs of Lemmas \ref{RempCofib} and \ref{lemmreduc}.

\vspace{1cm}

\subsection{An analog smaller complex for all operads}\label{analog}

Let $\PP$ be an operad and $A$ an algebra over $\PP$.

Suppose we have a quasi-free $\PP$-bimodule $(\PP \circ M \circ \PP, \partial)$ weakly equivalent to $\PP$ as a $\PP$-bimodule and such that 
\begin{enumerate}
 \item the $\Sigma_*$-module $M$ is connected and cofibrant as a $\Sigma_*$-module;
 \item the differential $\partial$ is decomposable, that is $\partial M  \subseteq \PP \circ M \circ \bar\PP + \bar\PP \circ M \circ \PP$.
\end{enumerate}

Under these hypotheses, Proposition \ref{cofbimod} implies that $(\PP \circ M \circ \PP, \partial)$ is cofibrant as a $\PP$-bimodule.

Let $Q_A = (\PP \circ M \circ A, \partial')$ be the $\PP$-algebra defined by the construction of Section \ref{smallcomplex} with the operad $\PP$ instead of the operad $\QQ$. Form the dg-module $(E \t_{U_\PP(A)} \Omega_\PP (Q_A), \partial'')$ associated to this $\PP$-algebra. We have again a map from $Q_A$ to $A$, but this map is not a weak equivalence without a cofibrancy hypothesis on $\PP$. 
Nevertheless, with the result of Lemma \ref{lemmreduc}, we can again reduce $(E \t_{U_\PP(Q_A)} \Omega_\PP (Q_A), \partial')$ to $(E \t M \circ A, \partial'')$.

Moreover, we have the following lemma of homology invariance:

\begin{sublemm}\label{HomolInvar}
 A weak equivalence of $\PP$-bimodules $(\PP \circ M_1 \circ \PP, \partial_1) \stackrel{\phi}{\fdr} (\PP \circ M_2 \circ \PP, \partial_2)$ (both satisfying the above hypotheses (1) and (2)) induces a quasi-isomorphism $(E \t M_1 \circ A, \partial_1'') \fdr (E \t M_2 \circ A, \partial_2'')$
\end{sublemm}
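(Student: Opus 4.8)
The plan is to deduce that the induced map $\phi_* : (E \t M_1 \circ A, \partial_1'') \fdr (E \t M_2 \circ A, \partial_2'')$ is a quasi-isomorphism from the single fact that $\phi$ restricts to a quasi-isomorphism on modules of generators, via a filtration on which the twisting differentials vanish. First, the map on generators: writing $R_i = (\PP \circ M_i \circ \PP, \partial_i)$, the weak equivalence $\phi : R_1 \fdr R_2$ is a morphism of $\PP$-bimodules, hence induces a morphism on the bimodule indecomposables $R_i / (\bar\PP \circ R_i + R_i \circ \bar\PP)$; since each $\partial_i$ is decomposable, this object is just $(M_i, \delta)$ with the internal differential alone, and the induced morphism is the ``linear part'' $M_1 \fdr M_2$ of $\phi$. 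This is a weak equivalence of $\Sigma_*$-modules: the indecomposables functor is left adjoint to the functor equipping a $\Sigma_*$-module with the trivial $\PP$-bimodule structure, and that functor is right Quillen (it preserves componentwise surjections and componentwise acyclic surjections), so the indecomposables functor is left Quillen; as $R_1$ and $R_2$ are cofibrant $\PP$-bimodules by Proposition \ref{cofbimod} and $\phi$ is a weak equivalence between them, the map $M_1 \fdr M_2$ is a weak equivalence (Ken Brown's lemma, still valid in the semi-model setting).

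Next, the filtration. I would filter $(E \t M_i \circ A, \partial_i'')$ by the arity of $M_i$, using the subcomplexes $E \t (ar_\lambda M_i) \circ A$ with $ar_\lambda M_i$ the truncation of $M_i$ to arities $\leq \lambda$. Unravelling how $\partial_i''$ is assembled from $\partial_i$ in the proofs of Lemmas \ref{RempCofib} and \ref{lemmreduc} — forming $(\PP \circ M_i \circ A, \partial'_i)$, then $\Omega_\PP$, then passing to $E$-coefficients along the augmentation $(\PP \circ M_i \circ A) \fdr A$ — and using that $\partial_i$ is decomposable, so that each application of $\partial_i$ to a generator splits off a nontrivial operation of $\bar\PP$ which, once composed with $A$ and once the surplus summands have been pushed into $E$ through the augmentation, strictly lowers the number of inputs fed from $A$ into the generator retained by $\Omega_\PP$, one checks that $\partial_i''$ strictly decreases this filtration. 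Hence $\phi_*$ is filtration-preserving, the associated graded differential reduces to the internal differential, and on the associated graded $\phi_*$ is just $E \t (M_1 \circ A \fdr M_2 \circ A)$ induced by the linear part of $\phi$. Since $M_1 \fdr M_2$ is a weak equivalence of cofibrant $\Sigma_*$-modules, $M_1 \circ A \fdr M_2 \circ A$ is a quasi-isomorphism ($A$ being cofibrant as a dg-module), and tensoring with the objectwise projective module $E$ preserves this; so $\phi_*$ is a quasi-isomorphism on associated graded. As the filtration is exhaustive and bounded in each fixed arity (recall $M_i(0) = 0$), a standard comparison of the associated spectral sequences, together with their convergence, upgrades this to $\phi_*$ being a quasi-isomorphism.

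The step I expect to be the main obstacle is the bookkeeping in the middle paragraph: verifying rigorously that $\partial_i''$, reconstructed through the two reductions of Lemmas \ref{RempCofib} and \ref{lemmreduc}, strictly lowers the arity of the surviving generator — this is precisely where decomposability of $\partial_i$ is used in an essential way, and it hinges on tracking how the extra generators that $\partial_i$ produces get absorbed into the coefficient module $E$ via the augmentation to $A$. A secondary point needing a word of justification is that the indecomposables functor is genuinely left Quillen for the semi-model structure on $\PP$-bimodules, equivalently that it carries weak equivalences between cofibrant bimodules to weak equivalences.
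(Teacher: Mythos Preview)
Your proposal is correct and follows essentially the same approach as the paper: both filter $(E \t M_i \circ A, \partial_i'')$ by the arity of the generator in $M_i$, identify the associated graded map with $E \t (\bar\phi \circ A)$ where $\bar\phi : M_1 \fdr M_2$ is the indecomposable part of $\phi$, and conclude by a spectral sequence comparison. The paper carries out the verification that $\phi$ preserves the filtration and that $E^0\phi = E \t \bar\phi \circ A$ by an explicit element-level computation, while you describe this step more abstractly; and for the fact that $\bar\phi$ is a weak equivalence you invoke a left Quillen argument for the indecomposables functor together with Ken Brown, whereas the paper simply asserts that $\bar\phi$ is a trivial $\Sigma_*$-cofibration --- your justification here is in fact more complete.
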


\begin{proof}

We consider a filtration on $(E \t M_1 \circ A, \partial_1'')$ and then use a spectral argument.

We set $F_s (E \t M_1 \circ A) = \Span_{r \leq s} \{ \xi \t m(a_1, \ldots, a_r)\}$. This complex is a subcomplex of $E \t M_1 \circ A$.

Let $\bar \phi : M_1 \fdr M_2$ denote the map $I \circ_\PP \phi \circ_\PP I$. The hypothesis (2) for $M_1$ and $M_2$ implies that $\bar \phi$ is the indecomposable part of $\phi$, and is a trivial $\Sigma_*$-cofibration. We get that $\bar\phi \circ A : M_1 \circ A \fdr M_2 \circ A$ is a trivial cofibration of dg modules.

Abusing the notation, we let $\phi$ denote also $E \t \phi \circ A : E \t M_1 \circ A \fdr E \t M_2 \circ A $.

Let us now prove that $\phi(F_s (E \t M_1 \circ A)) \subseteq F_s (E \t M_2 \circ A)$ and that $E^0\phi = E \t \bar\phi \circ A$.

\begin{displaymath}
\begin{array}{rl}
& \phi (\xi \t m(a_1, \ldots, a_r)) \\
 \stackrel{(1)}{=} & \xi \t \phi(m) (a_1, \ldots, a_r)  \\ 
 \stackrel{(2)}{=} &  \xi \t \bar\phi(m) (a_1, \ldots, a_r) + \sum \xi \t p(y_1, \ldots, y_t)(q_1,\ldots,q_s)(\underline a) \\ 
 \stackrel{(3)}{=} &  \xi \t \bar\phi(m) (a_1, \ldots, a_r) + \sum \xi \t p(y_1(\underline{q_1}(\underline{a_1}), \ldots, y_t(\underline{q_t}(\underline{a_t})))) \\
 \stackrel{(4)}{=} &  \xi \t \bar\phi(m) (a_1, \ldots, a_r) + \sum \sum_i \xi.u_i \t y_i(\underline{q_i}(\underline{a_i})).
\end{array}
\end{displaymath}

Underlined elements denote sequences of elements. Equality (2) is just using the definition of $ \bar \phi$ as the indecomposable part of $\phi$. Equality (3) comes from the composition of the subtree above each $y_i$.
In the equality (4), we use the isomorphism of Lemma \ref{lemmreduc}, and $u_i = p(y_1(\underline{q_1}(\underline{a_1})), \ldots, \diamond, \ldots,  y_t(\underline{q_t}(\underline{a_t})))$ with the hole in the $i$th position. The important thing to notice is that the arity of each $y_i$ is smaller than $r$, as the differential is decomposable.
This proves $\phi(F_s (E \t M_1 \circ A)) \subseteq F_s (E \t M_2 \circ A)$.

We now consider the associated graded complex $E^0_s (E \t M_1 \circ A) = F_s (E \t M_1 \circ A) / F_{r<s} (E \t M_1 \circ A).$
$$E^0_s (E \t M_1 \circ A) = \Span \{ \xi \t m(a_1, \ldots, a_s)\}.$$
The above calculation implies that $E^0\phi = E \t \bar\phi \circ A$.

With this equality and as $\bar\phi$ is a trivial cofibration, we get that $E^1(\phi)=H_* (E \t \bar\phi \circ A)$ is an isomorphism. 
Moreover, the spectral sequence converges, as it is a homological spectral sequence with an increasing exhaustive filtration which is bounded below.

This result implies that $H_*(\phi)$ is an isomorphism.
\end{proof}

Thus we have the following result:
\begin{sublemm}\label{HomolInvar2}
 The homology of $(E \t M \circ A, \partial'')$ does not depend on the choice of the bimodule $(\PP \circ M \circ \PP, \partial)$ weakly equivalent to $\PP$ (as a $\PP$-bimodule) such that hypotheses (1) and (2) are satisfied. 
\end{sublemm}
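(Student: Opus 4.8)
The plan is to reduce everything to Lemma \ref{HomolInvar}, which already treats the case of a \emph{single} weak equivalence between two bimodules of the prescribed shape. Concretely, given two quasi-free $\PP$-bimodules $(\PP \circ M_1 \circ \PP, \partial_1)$ and $(\PP \circ M_2 \circ \PP, \partial_2)$, both weakly equivalent to $\PP$ and both satisfying hypotheses (1) and (2), I would first produce a weak equivalence of $\PP$-bimodules $f\colon (\PP \circ M_1 \circ \PP, \partial_1) \fdr (\PP \circ M_2 \circ \PP, \partial_2)$. Lemma \ref{HomolInvar} then yields a quasi-isomorphism $(E \t M_1 \circ A, \partial_1'') \fdr (E \t M_2 \circ A, \partial_2'')$, so the homology of $(E \t M \circ A, \partial'')$ is the same for both choices; since the two resolutions were arbitrary, this is exactly the assertion.

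To construct $f$, I would argue as follows. By Proposition \ref{cofbimod} both $(\PP \circ M_i \circ \PP, \partial_i)$ are cofibrant $\PP$-bimodules, and together with the given weak equivalences $\epsilon_i\colon (\PP \circ M_i \circ \PP, \partial_i) \we \PP$ they are cofibrant replacements of $\PP$ in the semi-model category of $\PP$-bimodules. Every $\PP$-bimodule is fibrant there, since fibrations are the componentwise surjections and the map to the zero bimodule is componentwise surjective; hence the usual comparison calculus for cofibrant replacements applies. In particular $\epsilon_2$, being a weak equivalence between fibrant objects, induces a bijection on homotopy classes of maps out of the cofibrant object $\PP \circ M_1 \circ \PP$, so $\epsilon_1$ lifts, up to homotopy, to a morphism $f$ with $\epsilon_2 f$ homotopic to $\epsilon_1$; by the two-out-of-three property (and homotopy invariance of the class of weak equivalences) $f$ is a weak equivalence. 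In the more hands-on spirit of the arguments above, one can instead build $f$ directly by induction on the arity filtration $ar_\lambda M_1$ of Lemma \ref{cofbimod1}: a bimodule map out of the quasi-free object $(\PP \circ M_1 \circ \PP, \partial_1)$ amounts to a compatible $\Sigma_*$-linear map $M_1 \fdr \PP \circ M_2 \circ \PP$, hypothesis (2) guarantees $\partial_1(ar_\lambda M_1) \subset \PP \circ ar_{\lambda -1} M_1 \circ \PP$, and at each stage the extension problem over a generating $\Sigma_*$-cofibration is solvable because $\epsilon_2$ is a quasi-isomorphism, the obstruction cycles in $\PP \circ M_2 \circ \PP$ mapping to boundaries in $\PP$ and hence being boundaries.

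The hard part, such as it is, will be verifying that the abstract comparison of cofibrant replacements is legitimate in this \emph{semi}-model category rather than a genuine model category; but this is fine, because all the morphisms involved in the lifting and homotopy arguments have cofibrant domain, which is precisely the regime where the semi-model axioms and the attendant homotopy calculus of \cite{BouquinBenoit} are available. Apart from that, the argument is a formal consequence of Lemma \ref{HomolInvar}.
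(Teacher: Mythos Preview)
Your proof is correct and follows essentially the same approach as the paper: use cofibrancy of $(\PP\circ M_1\circ\PP,\partial_1)$ in the semi-model category of $\PP$-bimodules to lift $\epsilon_1$ along the acyclic fibration $\epsilon_2$, obtaining a weak equivalence $f$ to which Lemma~\ref{HomolInvar} applies. Your write-up is more detailed than the paper's (which simply asserts that the semi-model structure provides the comparison map) and your alternative inductive construction along the arity filtration is a nice concrete complement, but the strategy is the same.
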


\begin{proof}
Suppose we have the following configuration:
\begin{equation*}
\begin{array}{c}
$\xymatrix{
(\PP \circ M_1 \circ \PP, \partial_1) \ar[dr]^{\sim} &  & (\PP \circ M_2 \circ \PP, \partial_2)\ar[dl]_{\sim} \\
 & \PP & \\
}$\end{array}
\end{equation*}

First the semi-model structure on $\PP$-bimodules gives a weak equivalence between $(\PP \circ M_1 \circ \PP, \partial_1)$ and $(\PP \circ M_2 \circ \PP, \partial_2)$ (as $(\PP \circ M_1 \circ \PP, \partial_1)$ is cofibrant). Then Lemma \ref{HomolInvar} implies that the induced arrow $(E \t M_1 \circ A, \partial_1'') \fdr (E \t M_2 \circ A, \partial_2'')$ is a quasi-isomorphism. \end{proof}

\vspace{1cm}
\subsection {Definition of $\Gamma$-homology}\label{defGamma}
Let $\PP$ be an operad, $A$ an algebra over $\PP$ and $E$ a right $U_\PP(A)$-module. 
Suppose we have a quasi-free $\PP$-bimodule $(\PP \circ M \circ \PP, \partial)$ weakly equivalent to $\PP$ as a $\PP$-bimodule, satisfying hypotheses (1) and (2) of Section \ref{analog}. 

Define the $\Gamma$-homology of the $\PP$-algebra $A$ with coefficients in $E$ to be the homology of the small complex defined in Section \ref{smallcomplex}:
$$H\Gamma^\PP_* (A,E) = H_*(E \t M \circ A, \partial'').$$

Lemma \ref{HomolInvar2} proves that the notion of $\Gamma$-homology is well defined, as it does not depend on the choice of the bimodule $(\PP \circ M \circ \PP, \partial)$.

Moreover:

\begin{theo}
Let $\QQ$ be a $\Sigma_*$-cofibrant replacement of $\PP$.\\
For $A$ a $\PP$-algebra and $E$ a right $U_\PP(A)$-module, we have $ H\Gamma^\PP_* (A,E) = H^\QQ_* (A,E).$
\end{theo}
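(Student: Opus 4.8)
The strategy is to produce a single quasi-free $\PP$-bimodule $(\PP\circ M\circ\PP,\partial)$ satisfying hypotheses (1) and (2) of Section \ref{analog} whose small complex $(E\t M\circ A,\partial'')$ simultaneously computes $H\Gamma^\PP_*(A,E)$ (by definition and Lemma \ref{HomolInvar2}) and $H^\QQ_*(A,E)$. Since $\QQ$ is $\Sigma_*$-cofibrant, the homology $H^\QQ_*(A,E)$ is computed by a cofibrant replacement of $A$ in the category of $\QQ$-algebras; so the task is to build such a replacement out of $\QQ$-bimodule data and reconcile the two descriptions. First I would construct a quasi-free $\QQ$-bimodule $(\QQ\circ M\circ\QQ,\partial)$ weakly equivalent to $\QQ$ with $M$ connected and $\Sigma_*$-cofibrant and $\partial$ decomposable: this exists because $\QQ$, being $\Sigma_*$-cofibrant, admits a cofibrant replacement $(\QQ\circ M\circ\QQ,\partial)\we\QQ$ in the semi-model category of $\QQ$-bimodules, and by Proposition \ref{cofbimod} together with Lemmas \ref{cofbimod1}--\ref{cofbimod2} we may take it of this standard quasi-free decomposable form.

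Next I would invoke the weak equivalence $\QQ\we\PP$ of operads. Restriction along this morphism turns any $\QQ$-bimodule into a $\PP$-bimodule; more precisely, the morphism $\QQ\to\PP$ induces $\PP\circ_\QQ(\QQ\circ M\circ\QQ)\circ_\QQ\PP=\PP\circ M\circ\PP$, equipped with the twisted differential transported from $\partial$, and the composite $(\PP\circ M\circ\PP,\partial)\to\PP\circ_\QQ\QQ\circ_\QQ\PP=\PP$ is a weak equivalence because $\QQ\we\PP$ is one and the relevant base-change functors preserve weak equivalences between cofibrant objects (cf. \cite[Theorem 15.1.A]{BouquinBenoit}); the module $M$ is unchanged, so it remains connected and $\Sigma_*$-cofibrant, and decomposability of $\partial$ is preserved since $\bar\QQ$ maps into $\bar\PP$. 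So this $\PP$-bimodule satisfies (1) and (2), and by the definition of $\Gamma$-homology together with Lemma \ref{HomolInvar2}, $H\Gamma^\PP_*(A,E)=H_*(E\t M\circ A,\partial'')$ for the induced differential.

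It then remains to identify $H_*(E\t M\circ A,\partial'')$ with $H^\QQ_*(A,E)$. For this I would compare with the $\QQ$-side: by Lemma \ref{RempCofib} the $\QQ$-bimodule $(\QQ\circ M\circ\QQ,\partial)\we\QQ$ yields a cofibrant replacement $Q_A=(\QQ\circ M\circ A,\partial')$ of $A$ as a $\QQ$-algebra (here $A$ is viewed as a $\QQ$-algebra via $\QQ\to\PP$), so $H^\QQ_*(A,E)=H_*(E\t_{U_\QQ(Q_A)}\Omega_\QQ(Q_A))$, and Lemma \ref{lemmreduc} (applied with $C=M\circ A$) reduces this to $H_*(E\t M\circ A,\partial''_\QQ)$ for the twisting differential $\partial''_\QQ$ built from $\partial'$. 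The key point is that the twisting differential $\partial''$ on $E\t M\circ A$ obtained on the $\PP$-side coincides with $\partial''_\QQ$: both are determined, via the explicit recipe of Lemmas \ref{RempCofib} and \ref{lemmreduc} (Figures \ref{figPsi} and \ref{figdiff}), by the decomposable differential $\partial$ on $M$ and by the $\PP$-algebra structure on $A$ alone --- the operad composition only enters through the action of $\PP$ on $A$, which is the same whether we regard $A$ as a $\QQ$- or a $\PP$-algebra --- so the two small complexes are literally identical. Finally, using the coefficient module: $E$ is a $U_\PP(A)$-module, and via $U_\QQ(A)\to U_\PP(A)$ induced by $\QQ\to\PP$ the tensor product $E\t_{U_\QQ(Q_A)}(-)$ over the smaller enveloping algebra agrees with the expression appearing on the $\PP$-side after the reduction of Lemma \ref{lemmreduc}, since that reduction already replaces $U_\QQ(Q_A)\t(M\circ A)$ by the free module and the $U_\QQ(A)$-action factors through $U_\PP(A)$ on $E$.

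\textbf{Main obstacle.} The delicate point is the identification of the two twisting differentials on $E\t M\circ A$: one must check that the differential produced from the induced $\PP$-bimodule structure via Lemmas \ref{RempCofib} and \ref{lemmreduc} is exactly the one produced from the $\QQ$-bimodule $(\QQ\circ M\circ\QQ,\partial)$ via the same lemmas applied in the $\QQ$-world. This requires unwinding the graphical formulas of Figures \ref{figPsi} and \ref{figdiff} and verifying that the operad structure is used only through the $A$-action, together with a careful check that restriction along $\QQ\to\PP$ does not alter the decomposable differential $\partial$ on $M$ nor the formula for $\partial''$. A secondary technical point is ensuring the base-change functors $\PP\circ_\QQ-$ and $-\circ_\QQ A$ behave well on the (semi-)cofibrant objects at hand so that the weak equivalence $(\PP\circ M\circ\PP,\partial)\we\PP$ and the cofibrant replacement property of $Q_A$ both hold; this is where the $\Sigma_*$-cofibrancy of $\QQ$ and connectedness of $M$ are genuinely used.
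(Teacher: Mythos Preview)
Your proposal is correct and follows essentially the same approach as the paper: start from a quasi-free $\QQ$-bimodule resolution $(\QQ\circ M\circ\QQ,\partial)\we\QQ$, push it forward via $\PP\circ_\QQ-\circ_\QQ\PP$ to obtain a $\PP$-bimodule resolution $(\PP\circ M\circ\PP,\partial)\we\PP$ satisfying hypotheses (1) and (2), and then observe that the small complex $(E\t M\circ A,\partial'')$ computes both $H^\QQ_*(A,E)$ (via Section~\ref{smallcomplex}) and $H\Gamma^\PP_*(A,E)$ (by definition), with the differentials agreeing because both are induced from the original $\partial$ on $\QQ\circ M\circ\QQ$. One terminological slip: the passage from $\QQ$-bimodules to $\PP$-bimodules via $\PP\circ_\QQ-\circ_\QQ\PP$ is extension of scalars (a left Quillen functor), not restriction; your actual construction is the right one despite the misnomer.
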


\begin{proof} First, note that a $\PP$-algebra will also be a $\QQ$-algebra and $E$ will also be a right $U_\QQ(A)$-module.
Suppose that we are given $(\QQ \circ M \circ \QQ, \partial) \we \QQ$ a cofibrant replacement as $\QQ$-bimodules with the hypotheses above. The functor $\PP \circ_\QQ - \circ_\QQ \PP$ induces a Quillen's adjunction, and therefore we get a weak equivalence
$(\PP \circ M \circ  \PP, \partial) \we \PP$ between quasi-free $\PP$-bimodules. 
Seeing $A$ as a $\QQ$-algebra, we get $H_*^ \QQ (A,E) = H_* (E \t M \circ A, \partial'')$. 
But the right hand side is by definition $H\Gamma^\PP_* (A,E)$, as long as the differential is the same. It is the case, as both differentials are induced by the initial differential of $\QQ \circ M \circ \QQ$.\end{proof}

Thus the definition of homology by replacement of bimodules is equivalent to the natural definition by replacement of operads. Also, when the operad is $\Sigma_*$-cofibrant, we recover the usual notion of homology:

\begin{coro}\label{coro1}
Let $\QQ$ be a $\Sigma_*$-cofibrant operad, $B$ a $\QQ$-algebra and $E$ a right $U_\QQ(B)$-module.
Then $ H\Gamma^\QQ_* (B,E) = H^\QQ_* (B,E).$
\end{coro}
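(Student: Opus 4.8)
\textbf{Proof proposal for Corollary \ref{coro1}.}

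The plan is to deduce this as a special case of the preceding theorem, using the freedom in the choice of the bimodule resolution. First I would take $\QQ$ itself to play the role of $\PP$ in the statement of the theorem; since $\QQ$ is $\Sigma_*$-cofibrant, it is in particular an operad, so the theorem applies with $\PP := \QQ$. The theorem then asserts that for any $\Sigma_*$-cofibrant replacement $\QQ'$ of $\QQ$ one has $H\Gamma^\QQ_*(B,E) = H^{\QQ'}_*(B,E)$. So it remains only to check that $H^{\QQ'}_*(B,E) = H^\QQ_*(B,E)$ when $\QQ$ is already $\Sigma_*$-cofibrant, i.e.\ that the usual operadic homology $H^\QQ_*$ is invariant under weak equivalences of $\Sigma_*$-cofibrant operads acting on a fixed $\QQ$-algebra $B$.

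Alternatively, and more directly, I would simply take the identity $\QQ \fdr \QQ$ as the required cofibrant replacement of $\QQ$ in the category of $\QQ$-bimodules: $\QQ$ is cofibrant as a $\QQ$-bimodule over itself (the map $\mathsf I \fdr \QQ$ being, say, a cofibration, or more simply since $\QQ = \QQ \circ \mathsf I \circ \QQ$ is quasi-free on the cofibrant $\Sigma_*$-module $\mathsf I$ with zero, hence decomposable, differential, so Proposition \ref{cofbimod} applies with $M = \mathsf I$). With this choice $M = \mathsf I$, the small complex of Section \ref{smallcomplex} computing $H\Gamma^\QQ_*(B,E)$ is $(E \t \mathsf I \circ B, \partial'') = (E \t B, 0)$, the twisting differential vanishing because $\partial = 0$ on $\QQ$. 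On the other hand, the construction of Lemma \ref{RempCofib} applied to the trivial resolution $\QQ \we \QQ$ gives back $Q_B = \QQ \circ_\QQ B = B$ itself — but $B$ is typically not cofibrant, so one cannot stop here; rather, by Lemma \ref{HomolInvar2} the homology of $(E\t B,0)$ agrees with the homology of $(E \t N \circ B, \partial'')$ for \emph{any} quasi-free bimodule resolution $(\QQ \circ N \circ \QQ,\partial) \we \QQ$ satisfying hypotheses (1) and (2), in particular for one built from a genuine cofibrant replacement $Q_B$ of $B$ in the usual sense, which by Section \ref{smallcomplex} computes $H^\QQ_*(B,E) = H_*(E \t_{U_\QQ(Q_B)} \Omega_\QQ(Q_B))$.

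Concretely, the argument I would write runs: pick a quasi-free $\QQ$-bimodule $(\QQ \circ N \circ \QQ, \partial) \we \QQ$ with $N$ connected and $\Sigma_*$-cofibrant and $\partial$ decomposable (such resolutions exist by the semi-model structure on $\QQ$-bimodules and Proposition \ref{cofbimod}). Since $\QQ$ is $\Sigma_*$-cofibrant, Lemma \ref{RempCofib} produces an \emph{honest} cofibrant replacement $(\QQ \circ N \circ B, \partial') \we B$ of $B$ as a $\QQ$-algebra, and then Section \ref{smallcomplex} (via Lemma \ref{lemmreduc}) identifies $H^\QQ_*(B,E)$ with $H_*(E \t N \circ B, \partial'')$. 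But the right-hand side is by definition $H\Gamma^\QQ_*(B,E)$. The only point requiring the cofibrancy of $\QQ$ is the step "$(\QQ \circ N \circ B, \partial') \we B$ is a cofibrant replacement" — without it the map $Q_A \fdr A$ of Section \ref{analog} need not be a weak equivalence, which is exactly why the $\Gamma$-homology definition was needed in general. I expect the main (minor) obstacle to be bookkeeping: making sure the twisting differential $\partial''$ appearing in the $\Gamma$-homology complex and the one appearing in $H^\QQ_*$ via Lemmas \ref{RempCofib} and \ref{lemmreduc} are literally the same, which holds because both are induced in the same two steps from the single differential $\partial$ of $\QQ \circ N \circ \QQ$ — the identical observation made at the end of the proof of the theorem.
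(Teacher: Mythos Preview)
Your final ``concretely'' paragraph is correct and is essentially the content of Section~\ref{smallcomplex} specialized to $\PP=\QQ$. The paper, however, gives no separate argument at all: the corollary is an immediate application of the preceding theorem once you observe that a $\Sigma_*$-cofibrant operad is its own $\Sigma_*$-cofibrant replacement via the identity map. Taking $\PP=\QQ$ and the identity $\QQ\to\QQ$ as the replacement in the theorem yields $H\Gamma^\QQ_*(B,E)=H^\QQ_*(B,E)$ directly. Your first paragraph is almost this, but you then pass to an arbitrary replacement $\QQ'$ and ask for an invariance statement you do not need.

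Your second approach contains a genuine error: $\QQ\circ\mathsf I\circ\QQ = \QQ\circ\QQ$, not $\QQ$, because $\mathsf I$ is the \emph{unit} for $\circ$. Hence $\QQ$ is not the quasi-free $\QQ$-bimodule on $\mathsf I$, Proposition~\ref{cofbimod} does not apply with $M=\mathsf I$, and you cannot plug $M=\mathsf I$ into the definition of $\Gamma$-homology. (The map $\QQ\circ\QQ\to\QQ$ is the operad multiplication, which is not a weak equivalence in general, and in any case the hypotheses of Section~\ref{analog} require a quasi-free resolution.) This mistake is self-contained and does not affect your third paragraph, which stands on its own --- but the detour through $M=\mathsf I$ should be dropped.
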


\vspace{1cm}

\subsection {Definition of $\Gamma$-cohomology}\label{defGammaco}
Let $\PP$ be an operad, $A$ an algebra over $\PP$ and $F$ a left $U_\PP(A)$-module. 
Suppose we have a quasi-free $\PP$-bimodule $(\PP \circ M \circ \PP, \partial)$ weakly equivalent to $\PP$ as a $\PP$-bimodule, satisfying hypotheses (1) and (2) of Section \ref{analog}. 

When $\PP$ is $\Sigma_*$-cofibrant, we can make a similar reduction of the complex $Hom_{U_\PP(Q_A)}(\Omega_{\PP}(Q_A), F)$ computing cohomology.
We take for $Q_A$ the explicit cofibrant replacement $(\PP \circ M \circ A, \partial')$ given by Lemma \ref{RempCofib}. We apply now Lemma \ref{lemmreduc} and we get $Hom_{U_\PP(Q_A)}((U_\PP(Q_A) \t M \circ A, \partial''),F)$. By adjunction, this complex is  just $(Hom_{\mathcal C} (M \circ A, F), \partial'')$. 

Following the same ideas as in Section \ref{analog}, we consider this complex even when the operad $\PP$ has no cofibrancy hypothesis.

We define the $\Gamma$-cohomology of the $\PP$-algebra $A$ with coefficients in $F$ :
$$H^*_\PP (A,F)= H^* (Hom_{\mathcal C}(M \circ A, F), \partial'').$$

A lemma similar to Lemma \ref{HomolInvar2} proves that this notion is well-defined. We recover also the usual notion of cohomology when $\PP$ is $\Sigma_*$-cofibrant.

\begin{theo}
Let $\QQ$ be a $\Sigma_*$-cofibrant replacement of $\PP$.\\
For $A$ a $\PP$-algebra and $F$ a left $U_\PP(A)$-module, we have $ H\Gamma_\PP^* (A,F) = H_\QQ^* (A,F).$
\end{theo}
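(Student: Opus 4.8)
The plan is to mirror, essentially verbatim, the proof of the homological version (Theorem in Section~\ref{defGamma}), replacing the covariant reduction $E \t_{U_\PP(Q_A)} \Omega_\PP(Q_A)$ by the contravariant one $Hom_{U_\PP(Q_A)}(\Omega_\PP(Q_A),F)$ throughout. First I would note that a $\PP$-algebra $A$ is automatically a $\QQ$-algebra via the morphism $\QQ \we \PP$, and that a left $U_\PP(A)$-module $F$ is a left $U_\QQ(A)$-module by restriction along the induced map $U_\QQ(A) \fdr U_\PP(A)$; this is what makes the two sides comparable.

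Next I would fix a cofibrant replacement $(\QQ \circ M \circ \QQ, \partial) \we \QQ$ of $\QQ$-bimodules satisfying hypotheses (1) and (2) of Section~\ref{analog} (such a replacement exists since $M$ may be taken $\Sigma_*$-cofibrant and the differential decomposable). Applying the functor $\PP \circ_\QQ - \circ_\QQ \PP$, which is the left adjoint of a Quillen adjunction between $\QQ$-bimodules and $\PP$-bimodules, produces a weak equivalence $(\PP \circ M \circ \PP, \partial) \we \PP$ of quasi-free $\PP$-bimodules still satisfying (1) and (2). On the one hand, by Lemma~\ref{RempCofib} and Lemma~\ref{lemmreduc} (using that $\QQ$ is $\Sigma_*$-cofibrant so that the usual homotopical machinery applies), the cohomology $H^*_\QQ(A,F)$, computed from the cofibrant replacement $Q_A = (\QQ \circ M \circ A, \partial')$, reduces to $H^*\bigl(Hom_{U_\QQ(Q_A)}(\Omega_\QQ(Q_A),F)\bigr) = H^*\bigl(Hom_{U_\QQ(Q_A)}((U_\QQ(Q_A)\t M\circ A,\partial''),F)\bigr)$, which by adjunction is $H^*\bigl((Hom_{\mathcal C}(M\circ A, F),\partial'')\bigr)$. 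On the other hand, this last complex is by construction exactly the one defining $H\Gamma^*_\PP(A,F)$ in Section~\ref{defGammaco}, provided the twisting differentials agree.

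The one point requiring care — and the main obstacle — is precisely the identification of the two twisting differentials $\partial''$: one arising from $\QQ$ and one from $\PP$. As in the homological theorem, the argument is that both are induced, through the two-step procedure of Lemmas~\ref{RempCofib} and~\ref{lemmreduc}, by the single differential $\partial$ of $\QQ \circ M \circ \QQ$; pushing $\partial$ forward along $\PP\circ_\QQ - \circ_\QQ \PP$ gives the differential of $\PP \circ M \circ \PP$, and the reduction formulas are natural in the operad, so the resulting differentials on $Hom_{\mathcal C}(M\circ A, F)$ coincide. A secondary subtlety is checking that the adjunction isomorphism $Hom_{U_\PP(Q_A)}(U_\PP(Q_A)\t M\circ A, F) \cong Hom_{\mathcal C}(M\circ A, F)$ is compatible with the restriction of scalars $U_\QQ(Q_A) \fdr U_\PP(Q_A)$, i.e. that computing the $Hom$ over $U_\QQ$ against the restricted module $F$ gives the same complex; this is immediate from the fact that $M\circ A$ is the generating $\Sigma_*$-module in both enveloping-module descriptions. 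With these compatibilities in hand, the chain of identifications yields $H\Gamma^*_\PP(A,F) = H^*_\QQ(A,F)$, and the well-definedness lemma analogous to Lemma~\ref{HomolInvar2} guarantees the left-hand side does not depend on the chosen bimodule replacement.
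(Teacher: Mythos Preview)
Your proposal is correct and follows exactly the approach the paper takes: the paper does not spell out a separate proof for this cohomological statement, treating it as a direct analogue of the homological theorem in Section~\ref{defGamma}, and your argument is precisely that analogue---same choice of $(\QQ\circ M\circ\QQ,\partial)\we\QQ$, same use of the Quillen adjunction $\PP\circ_\QQ-\circ_\QQ\PP$, same reduction via Lemmas~\ref{RempCofib} and~\ref{lemmreduc} to $(Hom_{\mathcal C}(M\circ A,F),\partial'')$, and the same observation that both differentials are induced by the original $\partial$. Your explicit check of the compatibility of the adjunction with restriction along $U_\QQ(Q_A)\fdr U_\PP(Q_A)$ is a useful detail that the paper leaves implicit.
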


\begin{coro}\label{coro2}
Let $\QQ$ be a $\Sigma_*$-cofibrant operad, $B$ a $\QQ$-algebra and $F$ a left $U_\QQ(B)$-module.
Then $ H\Gamma_\QQ^* (B,F) = H_\QQ^* (B,F).$
\end{coro}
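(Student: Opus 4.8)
The plan is to read this statement off the preceding comparison theorem for $\Gamma$-cohomology by taking $\PP = \QQ$ and choosing the identity morphism as the $\Sigma_*$-cofibrant replacement. Since $\QQ$ is $\Sigma_*$-cofibrant by hypothesis, the identity $\QQ \we \QQ$ is a weak equivalence whose source is $\Sigma_*$-cofibrant, hence a $\Sigma_*$-cofibrant replacement of $\QQ$ in the required sense; the theorem applied to this choice gives $H\Gamma_\QQ^*(B,F) = H_\QQ^*(B,F)$ at once. This is the exact cohomological counterpart of the way Corollary \ref{coro1} is deduced from its theorem.

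For a self-contained version mirroring the discussion of Section \ref{defGammaco}: pick a quasi-free $\QQ$-bimodule $(\QQ \circ M \circ \QQ, \partial) \we \QQ$ with $M$ connected and $\Sigma_*$-cofibrant and $\partial$ decomposable, which exists because the semi-model structure on $\QQ$-bimodules provides a cofibrant replacement of $\QQ$ that, by Proposition \ref{cofbimod}, may be taken of this shape. By Lemma \ref{RempCofib}, $Q_B = (\QQ \circ M \circ B, \partial')$ is then a cofibrant replacement of $B$ as a $\QQ$-algebra; here one genuinely uses that $\QQ$ is $\Sigma_*$-cofibrant, so that the semi-model structure on $\QQ$-algebras exists and the canonical map $Q_B \fdr B$ is a weak equivalence. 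Consequently $H_\QQ^*(B,F)$ is computed by $Hom_{U_\QQ(Q_B)}(\Omega_\QQ(Q_B), F)$. Now apply Lemma \ref{lemmreduc} with $C = M \circ B$ to replace $\Omega_\QQ(Q_B)$ by $(U_\QQ(Q_B) \t M \circ B, \partial'')$, and use adjunction to rewrite the $Hom$-complex as $(Hom_{\mathcal C}(M \circ B, F), \partial'')$, which is by definition the complex computing $H\Gamma_\QQ^*(B,F)$; the two twisting differentials coincide because both descend from the initial differential $\partial$ of $\QQ \circ M \circ \QQ$ via the two-step recipe of Lemmas \ref{RempCofib} and \ref{lemmreduc}.

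I expect no substantial obstacle here. The only point deserving care, already flagged in Section \ref{defGammaco}, is that for $\PP = \QQ$ with $\QQ$ being $\Sigma_*$-cofibrant the replacement $Q_B \fdr B$ of Lemma \ref{RempCofib} is an honest weak equivalence, so the reduced complex computes the genuine cohomology rather than merely its $\Gamma$-version; independence of the particular resolution $M$ is supplied by the cohomological analogue of Lemma \ref{HomolInvar2} invoked in that section.
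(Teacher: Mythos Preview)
Your proposal is correct and matches the paper's approach: the paper gives no explicit proof for this corollary, treating it as an immediate consequence of the preceding theorem with $\PP=\QQ$ and the identity as $\Sigma_*$-cofibrant replacement, exactly as in your first paragraph. Your second, self-contained argument is a faithful unwinding of the reduction already described in Section~\ref{defGammaco} and is more detailed than anything the paper supplies.
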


\subsection{Remark.}
If the ground ring $\KK$ is a field of characteristic 0, then every operad $\PP$ is $\Sigma_*$-cofibrant. Hence in that case Corollary \ref{coro1} and Corollary \ref{coro2} imply that our $\Gamma$-(co)homology agrees with the standard (co)homology of $\PP$-algebras.

\vspace{2cm}

\section{Explicit complex \`a la Robinson}
From now on, we assume that $\PP$ is a connected binary (quadratic) Koszul operad.
We define an explicit $\PP$-bimodule complex, using the Koszul construction $K\PP$ and the bar construction of the symmetric group. Then we prove we can use this complex to compute $\Gamma$-homology of $\PP$-algebras. In the case $\PP=\mathsf{Com}$, we retrieve the complex introduced by Robinson.

Before defining the $\PP$-bimodules involved in the complex, we construct applications which will be needed to define the differential.

\subsection{Maps between bijections}
Let $r$ be a positive integer. Let $\underline X$ and $\underline Y$ be two ordered sets with $r$ elements.

We represent an element $w$ of $\textrm{Bij}(\underline X, \underline Y)$ by a table of values: 
\begin{equation*}
w=
\left(\begin{array}{c}
$\xymatrix@M=1pt@C=6pt@R=3pt{
x_1 & x_2 & \ldots & x_r \\
w(x_1) & w(x_2) & \ldots & w(x_r)
}$\end{array}
\right).
\end{equation*}

The ordering amounts to a fixed bijection between $\{1, \ldots, r\}$ and $\underline X$ (respectively $\underline Y)$. We can use these bijections to identify elements of $\textrm{Bij}(\underline X, \underline Y)$ with permutation of $\{1, \ldots, r \}$.

For each pair $\{i, j\} \subset \underline Y$, we form the bijection 
\begin{equation*}
c_{i,j}^e(w)=
\left(\begin{array}{c}
$\xymatrix@M=1pt@C=6pt@R=3pt{
x_1 & x_2 & \ldots &w^{-1}(i) & \ldots & \widehat{w^{-1}(j)} & \ldots & x_r \\
w(x_1) & w(x_2) & \ldots & e & \ldots & \widehat{j}  & \ldots &w(x_r)
}$\end{array}
\right)
\end{equation*}
if $w^{-1}(i)<w^{-1}(j)$ or the bijection
\begin{equation*}
c_{i,j}^e(w)=
\left(\begin{array}{c}
$\xymatrix@M=1pt@C=6pt@R=3pt{
x_1 & x_2 & \ldots &w^{-1}(j) & \ldots & \widehat{w^{-1}(i)} & \ldots & x_r \\
w(x_1) & w(x_2) & \ldots & e & \ldots & \widehat{i}  & \ldots &w(x_r)
}$\end{array}
\right)
\end{equation*}
if $w^{-1}(j)<w^{-1}(i)$.

If $w^{-1}(i)<w^{-1}(j)$, we have removed the column where $j$ is the image, and $i$ has been replaced by $e$. The application $c_{i,j}^e(w)$ is a bijection from $\underline X \smallsetminus \{w^{-1}(j)\}$ to $\underline Y \smallsetminus \{i,j\} \coprod {e}$.
In $\underline X \smallsetminus \{w^{-1}(j)\}$, we consider the restriction of the order of $\underline X$.
In $\underline Y \smallsetminus \{i,j\} \coprod {e}$, we consider the restriction of the order in $\underline Y$ with $e$ at the place of $i$.
Note that the application $c_{i,j}^e(w)$ can be identified with an element of $\Sigma_{r-1}$. 

In the case where $w^{-1}(j)<w^{-1}(i)$,  we have removed the column where $i$ is the image, and $j$ has been replaced by $e$. The application $c_{i,j}^e(w)$ is a bijection from $\underline X \smallsetminus \{w^{-1}(i)\}$ to $\underline Y \smallsetminus \{i,j\} \coprod {e}$ and can be identified with an element of $\Sigma_{r-1}$.

For each element $i$ in $\underline Y$, we form the bijection
 \begin{equation*}
c_{\emptyset,i}(w)=
\left(\begin{array}{c}
$\xymatrix@M=1pt@C=6pt@R=3pt{
x_1 & x_2 & \ldots  & \widehat{w^{-1}(i)} & \ldots & x_r \\
w(x_1) & w(x_2) & \ldots & \widehat{i}  & \ldots &w(x_r)
}$\end{array}
\right).
\end{equation*}
Here we have only removed the column where $i$ is the image.

The application $c_{\emptyset,i}(w)$ is a bijection from $\underline X \smallsetminus \{w^{-1}(i)\}$ to $\underline Y \smallsetminus \{i\}$. Again, it can be identified with an element of $\Sigma_{r-1}$ by considering the induced orders. 
These applications $c_{i,j}^e$ and $c_{\emptyset,i}$ play different roles, but note that $c_{\emptyset,i}(w)$ is just $c^y_{y,i}(w)$ for any $y$ in $\underline Y$ such that $w^{-1}(y)<w^{-1}(i)$.

\begin{sublemm}\label{cijcompatibles}
Let $\sigma$ be an element of $Bij(\underline Y) \simeq \Sigma_r$ and $w$ an element of $\textrm{Bij}(\underline X,\underline Y)$.

The applications $c_{i,j}^e(w)$ and $c_{\emptyset,i}(w)$ are compatible with the action of the symmetric group on the left, that is $\bar\sigma . c_{i,j}^e(w) = c_{\sigma(i),\sigma(j)}^e(\sigma . w) $, where $\bar\sigma$ is the bijection fixing $e$ induced by $\sigma$ on $\underline Y \smallsetminus \{i,j\} \coprod {e}$ .
\end{sublemm}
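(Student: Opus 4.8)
The statement is a naturality/equivariance assertion about the combinatorial operations $c_{i,j}^e$ and $c_{\emptyset,i}$, so the plan is to verify it directly by unwinding the definitions on the level of tables of values, being careful about the two cases in the definition of $c_{i,j}^e$ (whether $w^{-1}(i)$ or $w^{-1}(j)$ comes first). First I would fix $\sigma \in \textrm{Bij}(\underline Y)$ and $w \in \textrm{Bij}(\underline X, \underline Y)$, and observe that the relative order of $w^{-1}(i)$ and $w^{-1}(j)$ in $\underline X$ equals the relative order of $(\sigma\cdot w)^{-1}(\sigma(i)) = w^{-1}(i)$ and $(\sigma\cdot w)^{-1}(\sigma(j)) = w^{-1}(j)$, since left multiplication by $\sigma$ does not change the preimages as elements of $\underline X$, only relabels the images. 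Hence the same branch of the definition is used on both sides, and it suffices to treat (say) the case $w^{-1}(i) < w^{-1}(j)$; the other case is symmetric.

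In that case, by definition $c_{i,j}^e(w)$ is the bijection $\underline X \smallsetminus \{w^{-1}(j)\} \to \underline Y \smallsetminus \{i,j\} \amalg \{e\}$ obtained from $w$ by deleting the column with image $j$ and replacing the image $i$ by $e$. Applying $\sigma\cdot(-)$ to $w$ replaces each image $w(x)$ by $\sigma(w(x))$; then forming $c_{\sigma(i),\sigma(j)}^e(\sigma\cdot w)$ deletes the column whose image is $\sigma(j)$ — which is exactly the former $j$-column — and replaces the image $\sigma(i)$ — the former $i$-column — by $e$. Comparing the two resulting tables of values entry by entry, the source sets agree (both are $\underline X \smallsetminus \{w^{-1}(j)\}$ with the restricted order), and the target of $c_{\sigma(i),\sigma(j)}^e(\sigma\cdot w)$ is $\underline Y \smallsetminus \{\sigma(i),\sigma(j)\} \amalg \{e\}$, which is $\sigma$ applied columnwise to the target of $c_{i,j}^e(w)$, with $e$ kept in the position of $\sigma(i)$. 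This is precisely the statement that $c_{\sigma(i),\sigma(j)}^e(\sigma\cdot w) = \bar\sigma \cdot c_{i,j}^e(w)$, where $\bar\sigma$ is the bijection induced by $\sigma$ on $\underline Y \smallsetminus \{i,j\} \amalg \{e\}$ fixing $e$. The statement for $c_{\emptyset,i}$ then follows from the already-noted identity $c_{\emptyset,i}(w) = c_{y,y,i}^y(w)$ for suitable $y$, or equally by the same direct column-deletion argument with no replacement step.

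The only genuine subtlety — and the step I expect to require the most care — is the bookkeeping of the \emph{induced orders} under the identifications with elements of $\Sigma_{r-1}$: one must check that deleting a column and then relabeling via $\sigma$ induces the same linear order on the remaining images as relabeling first and then deleting, and that placing $e$ "at the place of $i$" is compatible with placing $e$ "at the place of $\sigma(i)$" after relabeling. Since $\sigma$ acts by a fixed relabeling of $\underline Y$ that is applied uniformly to every column, it commutes with the order-restriction operation, so this compatibility holds; I would spell this out by noting that the order on $\underline Y \smallsetminus \{i,j\} \amalg \{e\}$ (resp. on its $\sigma$-image) is in both cases pulled back from the fixed order on $\underline Y$ along the evident inclusion, with $e$ inserted in the slot vacated by $i$ (resp. $\sigma(i)$), and that $\bar\sigma$ is by construction the order-preserving-up-to-relabeling bijection between these two ordered sets. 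With this observed, the equality of the two sides is immediate from the entrywise comparison.
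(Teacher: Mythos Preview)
Your proposal is correct and follows essentially the same route as the paper: a direct table-of-values verification in the case $w^{-1}(i)<w^{-1}(j)$, after noting that $(\sigma\cdot w)^{-1}(\sigma(i))=w^{-1}(i)$ so both sides fall into the same branch of the definition, with the other case symmetric. You are in fact slightly more thorough than the paper, which neither spells out the order-bookkeeping for the $\Sigma_{r-1}$ identifications nor treats the $c_{\emptyset,i}$ case explicitly.
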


\begin{proof}
We prove the lemma in the case where $w^{-1}(i)<w^{-1}(j)$. The proof for the other case is obtained by permuting $i$ and $j$.

We already know that 
\begin{equation*}
c_{i,j}^e(w)=
\left(\begin{array}{c}
$\xymatrix@M=1pt@C=6pt@R=3pt{
x_1 &  \ldots &w^{-1}(i) & \ldots & \widehat{w^{-1}(j)} & \ldots & x_r \\
w(x_1)  & \ldots & e & \ldots & \widehat{j}  & \ldots &w(x_r)
}$\end{array}
\right).
\end{equation*}

We get 
 \begin{equation*}
\bar\sigma . c_{i,j}^e(w)=
\left(\begin{array}{c}
$\xymatrix@M=1pt@C=6pt@R=3pt{
x_1  & \ldots &w^{-1}(i) & \ldots & \widehat{w^{-1}(j)} & \ldots & x_r \\
\sigma(w(x_1))  & \ldots & e & \ldots & \widehat{j}  & \ldots & \sigma(w(x_r))
}$\end{array}
\right).
\end{equation*}

On the right hand side, we have
\begin{equation*}
\sigma . w=
\left(\begin{array}{c}
$\xymatrix@M=1pt@C=6pt@R=3pt{
x_1 &  \ldots & w^{-1}(i) & \ldots & w^{-1}(j) & \ldots & x_r \\
\sigma(w(x_1)) & \ldots & \sigma(i) & \ldots & \sigma(j)  & \ldots & \sigma(w(x_r))
}$\end{array}
\right)
\end{equation*}
and then 

 \begin{equation*}
c_{\sigma(i),\sigma(j)}^e(\sigma . w) =
\left(\begin{array}{c}
$\xymatrix@M=1pt@C=6pt@R=3pt{
x_1  & \ldots &w^{-1}(i) & \ldots & \widehat{w^{-1}(j)} & \ldots & x_r \\
\sigma(w(x_1))  & \ldots & e & \ldots & \widehat{j}  & \ldots & \sigma(w(x_r))
}$\end{array}
\right).
\end{equation*}
\end{proof}

We extend the definition of $c_{i,j}^e$ to sequences of bijections $\underline w = (w_0, \ldots w_n)$ by $ c_{i,j}^e(\underline w) = (c_{i,j}^e(w_0), \ldots, c_{i,j}^e (w_n))$.

\vspace{1cm}
\subsection{Definition of the complex}
We now define a $\Sigma_*$-module $M$ involved in our explicit complex computing $\Gamma$-homology. 
We are given $\PP$ a connected binary (quadratic) Koszul operad.

We consider $K\PP$ the Koszul construction of $\PP$, defined by $K(\PP)_{(s)}:=H_s(B_*(\PP)_{(s)})$. It is a cooperad, equipped with a differential, such that $(\PP \circ K\PP \circ \PP, \partial)$ is quasi-isomorphic to $\PP$. For more details, we refer the reader to the initial article of Ginzburg and Kapranov \cite{GK} or the article of Fresse \cite{Evanston}, of which we adopt the convention.

We also consider the chain complex $C_*(E\Sigma_\bullet)$ of the total space of the universal $\Sigma_n$-bundles in simplicial spaces, $n \in \NN$. The chain complex $C_*(E\Sigma_n)$ is the acyclic homogeneous bar construction of the symmetric group $\Sigma_n$, the module spanned in degree $t$ by the $(t+1)$-tuples of permutations $\underline w = (w_0, \ldots, w_t)$ together with the differential $\delta$ such that $\delta(\underline w) = \sum_i (-1)^i (w_0, \ldots, \widehat{w_i}, \ldots, w_t)$. We consider the left action of the symmetric group on this chain complex.

We define the $\Sigma_*$-module $M= K\PP \boxtimes C_*(E\Sigma_\bullet)$ by $M(r)= K\PP(r) \t C_*(E\Sigma_r)$. The action of the symmetric group is the diagonal action.

\vspace{1cm}
Now we construct an application $\Delta : M \fdr \PP \circ M \circ \PP$ which defines a twisting differential once extended by $\PP$-linearity on the right and as a $\PP$-derivation on the left.

Recall that the quadratic component of the cooperad product of $K\PP$ is given by the dual of the operadic composition in $\PP$ :
\begin{equation*}\begin{array}{c}
 $\xymatrix@M=3pt@H=4pt@W=3pt@R=8pt@C=4pt{
1 \ar@{-}[drr]\ar@{.}[rrrr] & \ar@{-}[dr]& \ar@{-}[d] & \ar@{-}[dl]& r \ar@{-}[dll] \\
 &  & K\PP\ar@{-}[d] &  & \\
 & &  & & 
}$\end{array}
\fdr 
 \sum \vcenter{ \xymatrix@M=4pt@H=4pt@W=4pt@R=8pt@C=4pt{
 	& j_1\ar@{-}[dr]\ar@{.}[rr] & \ar@{-}[d] & j_\ell\ar@{-}[dl] & \\
 	i_1\ar@{-}[drr] & i_2\ar@{.}[r] \ar@{-}[dr] & K\PP\ar@{-}[d]|{e}\ar@{.}[rr] & & i_k\ar@{-}[dll] \\
 	& & K\PP\ar@{-}[d] & & \\ &&&&}}
\end{equation*}
where the sum ranges over all partitions $\{i_1, \ldots, i_k \} \coprod \{j_1, \ldots, j_\ell\} = \{1, \ldots, r \}$ and $e$ is a dummy variable. 
We define two restrictions of this coproduct:
\begin{itemize}
\item $\Delta_-$ where we only keep the components of the differential where the set $\{i_1, \ldots, i_\ell \}$ is reduced to one index (when the element below in the composition is binary). 
\item $\Delta_+$ where we only keep the components of the differential where the set $\{j_1, \ldots, j_k \}$ is composed of two indices (when the element above in the composition is binary). 
\end{itemize}
Note that $\Delta_-(\gamma)= \Delta_+(\gamma)$ when $\gamma$ is an element with three inputs.

We use this coproduct $\Delta$ on $K\PP$ to define $\Delta$ on $K\PP \boxtimes C_*(E\Sigma_\bullet)$ by the following composite: 

\begin{equation*}\begin{array}{c}
 $\xymatrix@M=3pt@H=4pt@W=3pt@R=8pt@C=4pt{
1 \ar@{-}[drr]\ar@{.}[rrrr] & \ar@{-}[dr]& \ar@{-}[d] & \ar@{-}[dl]& r \ar@{-}[dll] \\
 &  & K\PP\ar@{-}[d] & \t \underline w & \\
 & &  & & 
}$\end{array}
\end{equation*}
\begin{equation*}
\fdr  \sum_i
\begin{array}{c}
 $\xymatrix@M=3pt@H=4pt@W=3pt@R=8pt@C=4pt{
1 \ar@{-}[drr]\ar@{.}[rrr] & \ar@{-}[dr] & \ar@{-}[d] & \hat{i}\ar@{-}[dl]\ar@{.}[r]& r \ar@{-}[dll] \\
i\ar@{-}[dr] &  & K\PP\ar@{-}[dl] & \t c_{\emptyset,i}(\underline w)  & \\
& K\PP\ar@{-}[d] & &  & \\
 & &  & &
}$\end{array} 
+ \sum_{\{i,j\}}
\begin{array}{c}
 $\xymatrix@M=3pt@H=4pt@W=3pt@R=8pt@C=4pt{
 & i\ar@{-}[dr] &  & j \ar@{-}[dl] & \\
1\ar@{.}[rr] \ar@{-}[drr] & \ar@{-}[dr]& K\PP\ar@{-}[d]|{e}\ar@{.}[r] & \hat{j}\ar@{-}[dl]\ar@{.}[r]& r \ar@{-}[dll] \\
 &  & K\PP\ar@{-}[d] & \t c^e_{i,j}(\underline w) & \\
 & &  & &
}$\end{array} 
\end{equation*}
\begin{equation*}
 \fdr \sum_i
\begin{array}{c}
 $\xymatrix@M=3pt@H=4pt@W=3pt@R=8pt@C=4pt{
1 \ar@{-}[drr]\ar@{.}[rrr] & \ar@{-}[dr] & \ar@{-}[d] & \hat{i}\ar@{-}[dl]\ar@{.}[r]& r \ar@{-}[dll] \\
i\ar@{-}[dr] &  & K\PP\ar@{-}[dl] & \t c_{\emptyset,i}(\underline w)  & \\
& \PP\ar@{-}[d] & &  & \\
 & &  & &
}$\end{array} 
+ \sum_{\{i,j\}} 
\begin{array}{c}
 $\xymatrix@M=3pt@H=4pt@W=3pt@R=8pt@C=4pt{
 & i\ar@{-}[dr] &  & j \ar@{-}[dl] & \\
1\ar@{.}[rr] \ar@{-}[drr] & \ar@{-}[dr]& \PP\ar@{-}[d]|{e}\ar@{.}[r] & \hat{j}\ar@{-}[dl]\ar@{.}[r]& r \ar@{-}[dll] \\
 &  & K\PP\ar@{-}[d] & \t c^e_{i,j}(\underline w) & \\
 & &  & &
}$\end{array} 
\end{equation*}

The first arrow consists in using $\Delta_-$ and $\Delta_+$ on $K\PP$ and the $c_{i,j}^e$ defined in the previous paragraph. The second arrow  comes from the twisting cochain $\kappa: K\PP \fdr \PP$ (which identifies elements of arity $2$ in $K\PP$ with elements of arity $2$ in $\PP$).

This construction defines $\Delta$ on representatives with the entries ordered from $1$ to $r$. We apply Lemma \ref{cijcompatibles} to extend this definition to $K\PP$.

\begin{sublemm}
The application $\Delta$ determines a differential of $\Sigma_*$-modules on $(\PP \circ M \circ \PP)$.
\end{sublemm}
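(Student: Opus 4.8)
The plan is to show that the map $\Delta : M \fdr \PP \circ M \circ \PP$ extended by $\PP$-linearity on the right and as a $\PP$-derivation on the left, when added to the internal differential coming from $\delta$ on $C_*(E\Sigma_\bullet)$ and the internal (bar-)differential of $K\PP$, squares to zero. Since $\Delta$ is a derivation-type twisting cochain, it suffices by the formula $\delta\circ\partial+\partial\circ\delta+\partial^2=0$ of Section~1 to check this on the generators $M \subset \PP \circ M \circ \PP$, i.e. to verify that $\delta(\Delta(x)) + \Delta(\delta(x)) + (\Delta\circ_{\text{left/right}}\Delta)(x) = 0$ for $x = \gamma \t \underline w \in K\PP(r) \t C_*(E\Sigma_r)$, where $\delta$ is the total internal differential. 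I would organize this into three matching verifications corresponding to the three ``types'' of term produced by $\Delta$ and its iteration: the $c_{\emptyset,i}$ (``$\Delta_-$'') part, the $c^e_{i,j}$ (``$\Delta_+$'') part, and the cross terms between them.

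First I would record the consequence of Lemma~\ref{cijcompatibles}: since $\Delta$ is built from $c^e_{i,j}$, $c_{\emptyset,i}$ and the cooperad coproduct, the equivariance $\bar\sigma.c^e_{i,j}(w) = c^e_{\sigma(i),\sigma(j)}(\sigma.w)$ (and likewise for $c_{\emptyset,i}$) together with the $\Sigma_*$-equivariance of the $K\PP$ coproduct and of $\kappa$ guarantees $\Delta$ is well-defined on all of $M$ (independent of the chosen ordering $1,\dots,r$), so it is legitimate to compute on ordered representatives. Next, the term $\Delta^2$ unpacks into iterated coproducts on $K\PP$; here I would use the coassociativity of the $K\PP$ cooperad product, split into the pieces that survive after applying $\Delta_-$/$\Delta_+$ twice, and pair them up. The key combinatorial input is that applying $c^e_{i,j}$ (or $c_{\emptyset,i}$) twice in the two possible orders produces the same underlying bijection in $\Sigma_{r-2}$ up to the sign dictated by the simplicial differential $\delta(\underline w)=\sum_i(-1)^i(w_0,\dots,\widehat{w_i},\dots,w_t)$ on the $E\Sigma_\bullet$ factor. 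Concretely: the internal differential $\delta$ acting on $c_{\emptyset,i}(\underline w)$ reproduces, with the right sign, the discrepancy between the two orders of iterating the $\Delta$-operations, which is exactly what is needed for the cross terms to cancel against $\Delta(\delta(x))$ and the internal bar-differential of $K\PP$.

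I would then treat the matching of signs carefully, since this is where the statement's content really lies. The relevant signs come from three sources that must be reconciled: (i) the Koszul sign rule in the iterated coproduct of $K\PP$ and in composing operations in $\PP$ via $\kappa$; (ii) the alternating sign in the simplicial differential $\delta$ on $C_*(E\Sigma_\bullet)$; and (iii) the permutation-of-inputs sign incurred when reindexing after deleting a column in a table of values (the passage between $\underline X\smallsetminus\{w^{-1}(j)\}$ and an element of $\Sigma_{r-1}$). I expect the verification to follow the standard template for bar-type differentials on Koszul (co)operads, with the acyclic bar construction of $\Sigma_\bullet$ playing the role of a ``resolution of symmetries'': the part of $\Delta^2$ that would obstruct $(\delta+\Delta)^2=0$ on the strict Koszul side (i.e. the failure of $(\PP\circ K\PP\circ\PP,\partial)$ to be a genuine complex without resolving the $\Sigma_r$-actions) is precisely killed by the internal differential on the $E\Sigma_\bullet$ factor through the maps $c^e_{i,j}$, $c_{\emptyset,i}$.

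The main obstacle I anticipate is not conceptual but bookkeeping: matching the three families of signs above across all the cases ($i<j$ versus $j<i$ in the definition of $c^e_{i,j}$, the $\Delta_-$ versus $\Delta_+$ branches, and whether the deleted input lies before or after the surviving ones) so that every term of $\Delta(\delta x) + \delta(\Delta x) + \Delta^2(x)$ finds its cancelling partner. A secondary subtlety is that $\Delta_-(\gamma)=\Delta_+(\gamma)$ only on three-input elements, so the cross terms between the two branches of $\Delta$ at the $\Delta^2$ stage require the coassociativity of the $K\PP$ coproduct restricted to its ``one index below / two indices above'' pieces, and one must check the book-keeping of which of the two $K\PP$-factors receives the twisting cochain $\kappa$ at each stage. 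Once the sign conventions are fixed (I would fix them by the convention of Fresse's \cite{Evanston} for $K\PP$ and the stated convention $\delta(\underline w)=\sum_i(-1)^i(\dots)$ for $E\Sigma_\bullet$), the cancellation is forced, and this yields $(\delta+\Delta)^2=0$, i.e. $\Delta$ determines a differential on $\PP\circ M\circ\PP$.
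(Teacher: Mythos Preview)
Your proposal contains a genuine conceptual error about the cancellation mechanism. You claim that the terms of $\Delta^2$ are cancelled by the simplicial differential $\delta$ on the $C_*(E\Sigma_\bullet)$ factor, writing that ``the internal differential $\delta$ acting on $c_{\emptyset,i}(\underline w)$ reproduces, with the right sign, the discrepancy between the two orders of iterating the $\Delta$-operations'' and that $(\PP\circ K\PP\circ\PP,\partial)$ would fail to be a complex without resolving the symmetries. Both assertions are false: $(\PP\circ K\PP\circ\PP,\partial)$ \emph{is} already a genuine complex, and the paper proves $\Delta^2=0$ on its own, independently of $\delta$. The anticommutation $\delta\Delta+\Delta\delta=0$ is stated and proved as a separate (trivial) lemma afterward; it plays no role in the vanishing of $\Delta^2$.

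The actual mechanism you are missing has two ingredients. First, for the terms of $\Delta_+\Delta_+$ (and dually $\Delta_-\Delta_-$) in which three inputs $\{a,b,c\}$ are merged by a composite $\kappa(\gamma'')\circ_f\kappa(\gamma''')$ of two binary operations in $\PP$, the paper observes that the resulting permutation $c^e_{f,a}c^f_{b,c}(\underline w)$ depends only on the ordered triple $\{i<j<k\}=\{a,b,c\}$; the sum of those composites over all such splittings then vanishes \emph{in $\PP$} precisely because $K\PP$ is the Koszul dual cooperad (its quadratic part is orthogonal to the relations of $\PP$). You never invoke this Koszul-dual relation, and without it these terms cannot be matched off. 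Second, for the remaining terms (two disjoint pairs in $\Delta_+\Delta_+$, and the cross terms $\Delta_+\Delta_-+\Delta_-\Delta_+$), the iterated $c$-maps literally commute, e.g.\ $c^e_{a,b}c^f_{c,d}(\underline w)=c^f_{c,d}c^e_{a,b}(\underline w)$ and $c^f_{b,c}c_{\emptyset,a}(\underline w)=c_{\emptyset,a}c^f_{b,c}(\underline w)$; the cancelling sign comes from permuting $\kappa$ past a suspension together with coassociativity of the $K\PP$ coproduct, not from $\delta$. Your bookkeeping plan, built on the wrong cancellation picture, would not close.
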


\begin{proof}
For an element $\gamma \in K\PP(r)$ and $\underline w$ a sequence of permutations in $\Sigma_r$, we decompose $\Delta^2 (\gamma \t \underline w)$ in the sum of three terms: the part induced by $\Delta_+\Delta_+$, the part induced by $\Delta_-\Delta_-$ and the part induced by $\Delta_+\Delta_- + \Delta_-\Delta_+$.

The composite $\Delta_+\Delta_+$ yields terms of the form:
$$\begin{array}{lrl}
(I) & 
	\vcenter{
	\xymatrix@M=3pt@H=4pt@W=3pt@R=8pt@C=4pt{
	&a\ar@{-}[dr]&b\ar@{-}[d] &c\ar@{-}[dl] \\
	\ldots\ar@{-}[dr] && \kappa(\gamma'') \circ_f \kappa(\gamma''')\ar@{-}[dl]|e& \\
	& \gamma' \t c_{f,a}^e c_{b,c}^f(\underline w)\ar@{-}[d] &&& \\
	&&&&}} \\
(II) &
	\vcenter{
	\xymatrix@M=3pt@H=4pt@W=3pt@R=8pt@C=4pt{
	a\ar@{-}[dr]&&b\ar@{-}[dl]&&c\ar@{-}[dr]&&d\ar@{-}[dl] \\
	&\kappa(\gamma'')\ar@{-}[drr]|e && \ldots\ar@{-}[d] && \kappa(\gamma''')\ar@{-}[dll]|f& \\
	&&& \gamma' \t c_{a,b}^e c_{c,d}^f(\underline w)\ar@{-}[d] &&& \\
	&&&&}}&
  \end{array}$$
\begin{itemize}
 \item Let $\{i < j < k\} = \{a,b,c\}$ denote the ordered subset formed by the triple $\{a,b,c\}$ in the indexing set.
We can identify the permutation occuring in terms of the form (I):
\begin{equation*}
c_{f,a}^e c_{b,c}^f(\underline w) =
\left(\begin{array}{c}
 \xymatrix@M=1pt@C=6pt@R=3pt{
 \ldots &w^{-1}(i) & \ldots & \widehat{w^{-1}(j)} & \ldots & \widehat{w^{-1}(k)} & \ldots \\
 \ldots & e & \ldots & \widehat{j}  & \ldots &\widehat{k}  & \ldots
 }
\end{array}\right).
\end{equation*}
Thus the result of the composite $c_{a,b}^e c_{c,d}^f$ only depends on $\{i < j < k\}$.

The sum of the terms associated to a given triple $\{i < j < k\}$ is $0$ because the sum of the compositions 
	$\vcenter{
	\xymatrix@M=3pt@H=4pt@W=3pt@R=8pt@C=4pt{
	&i\ar@{-}[dr]&j\ar@{-}[d] &k\ar@{-}[dl] \\
	&& \kappa(\gamma'') \circ_f \kappa(\gamma''')\ar@{-}[d]& \\
	&&&&&}}$
cancels in $\PP$ by construction of the Koszul dual (cf. \cite[Section 5.2]{Evanston}) and the sum of terms (I) is $0$.
\item For terms (II), we have the relation $c_{a,b}^e c_{c,d}^f(\underline w)=c_{c,d}^f c_{a,b}^e (\underline w)$. By coassociativity of the coproduct in $K\PP$, the terms (II) cancel each another. Note simply that a permutation of $\kappa$ with a suspension produces a sign opposition. 
\end{itemize}
Thus the part of  $\Delta^2$ induced by $\Delta_+\Delta_+$ is $0$.

The cancellation of the part induced by $\Delta_-\Delta_-$ is similar to the proof of the cancellation of terms (I).

We now study the part induced by $\Delta_+\Delta_- + \Delta_-\Delta_+$. The composite $\Delta_-\Delta_+$ yields terms of the form:
$$\begin{array}{lrl}
(III') & 
	\vcenter{
	\xymatrix@M=3pt@H=8pt@W=3pt@R=10pt@C=4pt{
	&&b\ar@{-}[dr]&&c\ar@{-}[dl] \\
	&a\ar@{-}[dr]&&\kappa(\gamma''')\ar@{-}[dl]|f& \\
	\ldots\ar@{-}[dr] && \gamma''\t c_{\emptyset,a} c_{b,c}^f(\underline w)\ar@{-}[dl]|e& \\
	& \kappa(\gamma') \ar@{-}[d] &&& \\
	&&&&}}
  \end{array},$$
while the composite $\Delta_+\Delta_-$ yields terms of the form:
$$\begin{array}{lrl}
(III'') & 
	\vcenter{
	\xymatrix@M=3pt@H=8pt@W=3pt@R=10pt@C=4pt{
	&&b\ar@{-}[dr]&&c\ar@{-}[dl] \\
	&a\ar@{-}[dr]&&\kappa(\gamma''')\ar@{-}[dl]|f& \\
	\ldots\ar@{-}[dr] && \gamma''\t c_{b,c}^f c_{\emptyset,a} (\underline w)\ar@{-}[dl]|e& \\
	& \kappa(\gamma') \ar@{-}[d] &&& \\
	&&&&}}
  \end{array}.$$
But $c_{b,c}^f c_{\emptyset,a} (\underline w) = c_{\emptyset,a} c_{b,c}^f(\underline w)$. So by coassociativity of the coproduct in $K\PP$, we prove the cancellation of terms (III') with terms (III''). We use again that a permutation of $\kappa$ with a suspension produces a sign opposition. 

Thus the part of  $\Delta^2$ induced by $\Delta_+\Delta_- + \Delta_-\Delta_+$ is $0$.

Finally, we have proved that $\Delta^2=0$.

Moreover, the application is compatible with the symmetric action by Lemma~\ref{cijcompatibles}.
\end{proof}

\vspace{1cm}
We now consider the differential of the bar construction of the symmetric group and use it to define another differential $\delta$ on $\PP \circ M \circ \PP$.

For $\underline w = (w_0, \ldots w_n)$, recall that $\delta(\underline w) = \sum_i (-1)^i (w_0, \ldots, \widehat{w_i}, \ldots, w_n)$.

We define the application $\delta$ on $K\PP \boxtimes C_*(\Sigma_\bullet) \fdr K\PP \boxtimes C_*(\Sigma_\bullet)$ by 
$$\delta(\gamma \t \underline w) = (-1)^{|\gamma|} \gamma \t \delta(\underline w).$$

\begin{sublemm}
 The application $\delta$ induces a differential on $\PP \circ (K\PP \boxtimes C_*(E\Sigma_\bullet)) \circ \PP$ that anticommutes with $\Delta$.
\end{sublemm}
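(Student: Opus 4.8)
The plan is to establish the two claims separately: first that $\delta$ descends to a square-zero derivation of the quasi-free $\PP$-bimodule $\PP \circ M \circ \PP$, and then that $\delta$ anticommutes with the twisting map $\Delta$, so that $\delta+\Delta$ (together with the internal differential of $\PP$) becomes a genuine differential.

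\emph{Descent and $\delta^2=0$.} Since $\delta(\gamma \otimes \underline{w}) = (-1)^{|\gamma|}\gamma \otimes \delta(\underline{w})$ alters only the $C_*(E\Sigma_\bullet)$-tensor factor of $M = K\PP \boxtimes C_*(E\Sigma_\bullet)$ and leaves the arity $r$ (hence the surrounding $\PP$-factors) untouched, it suffices to check that $\delta$ is an equivariant endomorphism of the $\Sigma_*$-module $M$ with $\delta^2=0$; it then extends uniquely by the bimodule-derivation rule to $\PP \circ M \circ \PP$, with no extra Koszul signs since the ambient $\PP$-entries that $\delta$ must commute past carry degree $0$. The face-deletion differential $\underline{w} \mapsto \sum_k (-1)^k (w_0, \ldots, \widehat{w_k}, \ldots, w_n)$ of the acyclic bar construction $C_*(E\Sigma_r)$ is $\Sigma_r$-equivariant for the diagonal action and satisfies $\delta^2=0$ by the usual simplicial identities; tensoring with $K\PP(r)$ and inserting the sign $(-1)^{|\gamma|}$, which is constant along the $\delta$-orbit of $\gamma \otimes \underline{w}$ because $\delta$ does not touch $\gamma$, preserves both properties. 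Hence $\delta$ is a well-defined square-zero derivation of $\PP \circ M \circ \PP$.

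\emph{Anticommutation with $\Delta$.} Fix $\gamma \in K\PP(r)$ and $\underline{w} = (w_0, \ldots, w_n)$. By construction, $\Delta(\gamma \otimes \underline{w})$ is a sum, over single indices $i$ and pairs $\{i,j\}$, of trees whose unique middle $M$-layer is $\gamma' \otimes c_{\emptyset,i}(\underline{w})$, respectively $\gamma' \otimes c^e_{i,j}(\underline{w})$, where the $\gamma'$ are the ``large'' factors of the Koszul-dual coproduct of $\gamma$ and the adjacent $\PP$-entry is the binary element $\kappa(\gamma'')$. The crucial point is that $c_{\emptyset,i}$ and $c^e_{i,j}$ are defined componentwise on sequences, so they commute with the face-deletion maps: $c(\delta(\underline{w})) = \delta(c(\underline{w}))$ for $c \in \{c_{\emptyset,i}, c^e_{i,j}\}$. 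Consequently $\delta\Delta(\gamma \otimes \underline{w})$ and $\Delta\delta(\gamma \otimes \underline{w})$ produce exactly the same trees with the same combinatorial decorations, and differ only in the sign carried by $\delta$: applying $\delta$ first contributes $(-1)^{|\gamma|}$, whereas applying $\delta$ after $\Delta$ contributes $(-1)^{|\gamma'|}$. Since the Koszul-dual coproduct splits off a binary piece — which $\kappa$ sends into $\PP(2)$ in degree $0$ — the residual factor satisfies $|\gamma'| = |\gamma|-1$, so $(-1)^{|\gamma'|} = -(-1)^{|\gamma|}$ and therefore $\delta\Delta + \Delta\delta = 0$ on $M$. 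Equivariance of $\Delta$ (Lemma \ref{cijcompatibles}) and the fact that $\delta$ and $\Delta$ were both built compatibly with the $\PP$-bimodule structure let this identity propagate to all of $\PP \circ M \circ \PP$.

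\emph{Expected obstacle.} The only real subtlety is the sign bookkeeping: one must make sure that the suspension conventions on $K\PP$ and the rule that ``a permutation of $\kappa$ past a suspension produces a sign opposition'' (already invoked in the proof of $\Delta^2=0$) are applied in the same way in both orders of composition, so that these hidden signs cancel between $\delta\Delta$ and $\Delta\delta$ and the net discrepancy is precisely the $-1$ coming from $|\gamma'| = |\gamma|-1$. Granting the sign conventions, everything else reduces to $\delta^2=0$ on the bar construction and the componentwise nature of the maps between bijections.
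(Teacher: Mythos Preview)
Your proof is correct and, in fact, supplies far more than the paper does: the paper offers no argument whatsoever for this lemma, marking it with a bare $\qed$ and treating the claim as routine. The two key observations you isolate --- that the maps $c_{\emptyset,i}$ and $c^e_{i,j}$ act componentwise on tuples of bijections and therefore commute with the simplicial face-deletion differential, and that the surviving $K\PP$-factor after $\Delta$ has degree $|\gamma|-1$ so that the Koszul prefactor $(-1)^{|\gamma|}$ in the definition of $\delta$ flips sign --- are exactly what makes the anticommutation work, and your hedge about sign conventions is appropriate given that the paper itself relies on the same implicit conventions in the proof of $\Delta^2=0$.

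One small remark: your assertion that the ambient $\PP$-entries carry degree~$0$ is using implicitly that the binary generators of $\PP$ sit in degree~$0$ (so that $\kappa$ lands in degree~$0$); this is the standard situation for the examples the paper cares about ($\mathsf{Com}$, $\mathsf{Lie}$) but is not stated as a blanket hypothesis. If one wanted to be fully general, the extra Koszul sign picked up when $\delta$ passes the root $\kappa(\gamma')$ in the $\Delta_-$-terms would appear identically in both $\delta\Delta$ and $\Delta\delta$ and hence cancel, so the conclusion is unaffected.
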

\qed

Putting all this together, we get:

\begin{theo}
 We have defined a quasi-free dg $\PP$-bimodule  $(\PP \circ (K\PP \boxtimes C_*(E\Sigma_\bullet)) \circ \PP, \Delta+\delta)$, where $\Delta$ and $\delta$ are both a differential.
\end{theo}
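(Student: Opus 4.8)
The plan is to assemble the proof from the pieces that have already been established, checking that each structural requirement of a quasi-free dg $\PP$-bimodule is met by the object $(\PP \circ (K\PP \boxtimes C_*(E\Sigma_\bullet)) \circ \PP, \Delta+\delta)$. First I would recall that $M = K\PP \boxtimes C_*(E\Sigma_\bullet)$ has been defined as a $\Sigma_*$-module via $M(r) = K\PP(r)\t C_*(E\Sigma_r)$ with the diagonal action of $\Sigma_r$, and that the free $\PP$-$\PP$-bimodule on $M$ is $\PP \circ M \circ \PP$; by construction this underlying bimodule is free, so what must be verified is that the total map $\Delta+\delta$, extended by $\PP$-linearity on the right and as a $\PP$-derivation on the left, is indeed a differential, i.e. a degree $-1$ map squaring to zero and commuting with the internal differentials. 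The degree count: $\Delta$ lowers the $K\PP$-resolution degree by one while $\delta$ lowers the simplicial (bar) degree by one, so both are homogeneous of degree $-1$ on $M$, hence on $\PP \circ M \circ \PP$ after the (derivation, linear) extension.

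Next I would invoke the two lemmas proved just above: the first lemma shows $\Delta^2 = 0$ (via the case analysis on $\Delta_+\Delta_+$, $\Delta_-\Delta_-$, and $\Delta_+\Delta_-+\Delta_-\Delta_+$, using coassociativity of the $K\PP$ coproduct, the Koszul relations defining $K\PP$, and the sign rule coming from permuting $\kappa$ past a suspension), while the second lemma shows that $\delta$ induces a differential on $\PP \circ M \circ \PP$ and that it anticommutes with $\Delta$, i.e. $\Delta\delta + \delta\Delta = 0$. Then $(\Delta+\delta)^2 = \Delta^2 + (\Delta\delta+\delta\Delta) + \delta^2 = 0 + 0 + 0 = 0$, using also $\delta^2 = 0$ which holds because $\delta$ is (up to the sign $(-1)^{|\gamma|}$) the differential of the acyclic bar construction of $\Sigma_r$, which is classically a differential. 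One must also remark that $\Delta$ and $\delta$ are compatible with the symmetric group action — $\Delta$ by Lemma~\ref{cijcompatibles} (the $c_{i,j}^e$ and $c_{\emptyset,i}$ being equivariant) and $\delta$ because it is built from the left $\Sigma_r$-equivariant bar differential tensored with the identity of $K\PP$ — so that both descend from the chosen representatives ordered $1,\dots,r$ to all of $M$, and thus to $\PP \circ M \circ \PP$.

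Finally I would note that the extension of $\Delta+\delta$ to $\PP \circ M \circ \PP$ is forced: once a degree $-1$ map $M \fdr \PP \circ M \circ \PP$ is given, there is a unique way to extend it to a $\PP$-biderivation of the free bimodule, and the property of squaring to zero on the generators $M$ propagates to the whole bimodule by the Leibniz rule (this is exactly the mechanism recalled in the discussion of quasi-free objects and twisted differentials in Section~1). Hence $(\PP \circ M \circ \PP, \Delta+\delta)$ is a quasi-free dg $\PP$-bimodule, and moreover $\Delta$ and $\delta$ are each separately a differential, as claimed. I expect the only genuinely delicate point — already dispatched in the lemma immediately preceding the theorem — to be the sign bookkeeping in $\Delta^2 = 0$ and in the anticommutation $\Delta\delta + \delta\Delta = 0$, specifically the interaction between the Koszul sign from the suspension in $K\PP$, the sign $(-1)^{|\gamma|}$ inserted in the definition of $\delta$ on $M$, and the alternating signs $(-1)^i$ of the bar differential; with those lemmas in hand the theorem is essentially a bookkeeping summary, so the proof itself is short.
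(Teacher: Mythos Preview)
Your proposal is correct and follows exactly the paper's approach: the paper states the theorem with the preamble ``Putting all this together, we get:'' and gives no further proof, relying entirely on the two preceding sublemmas (that $\Delta$ is a differential and that $\delta$ is a differential anticommuting with $\Delta$), which is precisely the assembly you spell out. Your write-up is simply a more explicit version of what the paper leaves implicit.
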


\vspace{1cm}
\subsection{Homology of the complex}

The goal of this paragraph is to prove that we have a quasi-isomorphism $(\PP \circ (K\PP \boxtimes C_*(E\Sigma_\bullet)) \circ \PP, \Delta+\delta) \we \PP$ of $\PP$-bimodules.

First, we consider a dg-module
morphism defined by:
\begin{equation*}
 \left\{
\begin{array}{l}
K\PP(r) \boxtimes C_0(\Sigma_r) \fdr K\PP(r)\\
K\PP(r) \boxtimes C_{\geq 1} (\Sigma_r) \fdr 0.
\end{array}
\right.
\end{equation*}
The first part of the arrow just forgets the permutation.

This morphism induces a $\PP$-bimodule morphism $(\PP \circ (K\PP \boxtimes C_*(E\Sigma_\bullet)) \circ \PP, \Delta+\delta) \we (\PP \circ K\PP \circ \PP, \partial)$, by extension by linearity on the right, and as a derivation on the left. We call $\epsilon$ this $\PP$-bimodule morphism. Note that $\Delta$ is sent to the usual differential $\partial$ of the Koszul construction with coefficients $K(\PP,\PP, \PP)$ (see \cite{Evanston} for details about that Koszul contruction), while $\delta$ is sent to 0.

We will now use a spectral argument to show that $(\PP \circ (K\PP \boxtimes C_*(E\Sigma_\bullet)) \circ \PP, \Delta+\delta)$ is quasi-isomorphic to $(\PP \circ K\PP \circ \PP, \Delta)$.

We see $(\PP \circ (K\PP \boxtimes C_*(E\Sigma_\bullet)) \circ \PP)$ as a bimodule, with differentials $\Delta$ and $\delta$. The first graduation is the bar degree $r$ in $K\PP$ and the second graduation is the number $*$ of permutations.
$$(E^0_{r,*}, d^0)= (\PP \circ (K\PP_r \boxtimes C_*(E\Sigma_\bullet)) \circ \PP, \delta)$$
We now use that $C_*(E\Sigma_\bullet)$ is acyclic, that is $H_n(C_*(E\Sigma_\bullet))= \KK$ if $n=0$ and $0$ otherwise. We also use that the functors $\PP \circ -$, $- \circ \PP$ and $K\PP_r \ \t -$ preserve quasi-isomorphims (for instance, cf.\cite[Theorem 2.1.15] {Evanston}).

Thus we get that $H_n(\PP \circ (K\PP_r \boxtimes C_*(E\Sigma_\bullet)) \circ \PP, \delta) = \PP \circ K\PP_r \circ \PP$.
$$(E^1_{r,0}, d^1)= (\PP \circ K\PP_r \circ \PP, \partial)$$
$$E^2_{r,0}=H_r(\PP \circ K\PP \circ \PP, \partial)$$
We know that the spectral sequence of a bicomplex (both graduations being bounded below) converges to the total homology of the bicomplex.

Thus $H_*(\PP \circ (K\PP \boxtimes C(E\Sigma_\bullet)) \circ \PP, \Delta+\delta)= H_*(\PP \circ K\PP \circ \PP, \partial)$.

This proves that $\epsilon$ is a quasi-isomorphism  $(\PP \circ (K\PP \boxtimes C_*(E\Sigma_\bullet)) \circ \PP, \Delta+\delta) \we  (\PP \circ K\PP \circ \PP, \partial)$. We can compose it with the quasi-isomophism between $\PP \circ K\PP \circ \PP$ and $\PP$, and finally this gives us a quasi-isomorphism $(\PP \circ (K\PP \boxtimes C_*(E\Sigma_\bullet)) \circ \PP, \Delta+\delta) \we \PP$ of $\PP$-bimodules.

\vspace{1cm}
\subsection{Back to $\Gamma$-homology}
We now prove that the $\PP$-bimodule constructed in the previous paragraphs satisfies all the required hypotheses so we can use it to compute $\Gamma$-homology.

It has the form $\PP \circ M \circ P$, with $M$ a $\Sigma_*$-module such that  $M(0)=0$. We now have to prove that $(\PP \circ (K\PP \boxtimes C_*(E\Sigma_\bullet)) \circ \PP, \Delta + \delta)$ is a cofibrant $\PP$-bimodule.

The $\PP$-bimodule $(\PP \circ (K\PP \boxtimes C_*(E\Sigma_\bullet)) \circ \PP, \Delta + \delta)$ can be seen as $(\PP \circ (K\PP \boxtimes C_*(E\Sigma_\bullet), \delta) \circ \PP, \Delta)$. The differential $\Delta$ is decomposable. We first prove that $(K\PP \boxtimes C_*(E\Sigma_\bullet), \delta)$ is a cofibrant $\Sigma_*$-module, and then Proposition \ref{cofbimod} will give us the result.

\begin{sublemm}
 The $\Sigma_*$-bimodule $(K\PP \boxtimes C_*(E\Sigma_\bullet), \delta)$ is cofibrant.
\end{sublemm}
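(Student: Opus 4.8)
The plan is to build up the cofibrancy of $(K\PP \boxtimes C_*(E\Sigma_\bullet), \delta)$ from the known cofibrancy of the chain complexes $C_*(E\Sigma_r)$ of the universal bundles, exactly as in the proofs of Lemmas~\ref{cofbimod1} and~\ref{cofbimod2}: namely, I would exhibit the object as a colimit of a sequence of pushouts along generating cofibrations of $\Sigma_*$-modules. Recall (Section~\ref{modcatsigma}) that the generating cofibrations are the maps $i \t F_r : C \t F_r \fdr D \t F_r$ with $i$ a generating cofibration of dg-modules, so it suffices to filter each arity $r$ separately and treat $K\PP(r) \t C_*(E\Sigma_r)$ with its internal differential $\delta$ coming from the bar construction of $\Sigma_r$.

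First I would observe that, because $\PP$ is connected, $K\PP$ is connected, so $M(0)=0$ and we only need to deal with arities $r \geq 1$; for each fixed $r$, the differential $\delta$ on $K\PP(r) \t C_*(E\Sigma_r)$ acts only on the $C_*(E\Sigma_r)$ factor (up to the sign $(-1)^{|\gamma|}$), so the $K\PP(r)$ factor is inert. The key input is that $C_*(E\Sigma_r)$, the acyclic homogeneous bar construction, is a complex of free $\KK[\Sigma_r]$-modules (spanned in degree $t$ by $(t+1)$-tuples of permutations, on which $\Sigma_r$ acts freely on the left), hence it is a cofibrant object in the projective model structure on chain complexes of $\KK[\Sigma_r]$-modules: it can be built from $0$ by a sequence of pushouts attaching free cells $\KK[\Sigma_r] \t D_n$ along $\KK[\Sigma_r] \t C_n$, using the filtration by the number of permutations. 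Tensoring this cellular decomposition with the (cofibrant) dg-module $K\PP(r)$ on the left — $K\PP(r)$ is cofibrant as a dg-module since, as a subquotient of the bar construction of a $\Sigma_*$-cofibrant... here $K\PP(r)$ just needs to consist of projective $\KK$-modules, which is our standing convention — and then regarding $\KK[\Sigma_r]$ as the component $F_r$ of a free $\Sigma_*$-module, one produces a presentation of $K\PP \boxtimes C_*(E\Sigma_\bullet)$ as a colimit of pushouts of the generating cofibrations $i \t F_r$. Since the forgetful functor and the free $\Sigma_*$-module functor restrict arity by arity, this is all that is required.

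Concretely, I would write $G_t(K\PP(r) \t C_*(E\Sigma_r))$ for the subcomplex spanned by tuples $\gamma \t \underline w$ with $\underline w$ of simplicial degree $\leq t$, note that $\delta$ lowers this degree, and display the pushout
\begin{equation*}
\begin{array}{c}
$\xymatrix@M=8pt@C=30pt@R=30pt{
(K\PP(r) \t D_t^{\oplus b_t} \t F_r , 0) \ar[d] \ar[r] & G_{t-1}(K\PP(r) \t C_*(E\Sigma_r)) \t F_r \ar[d] \\
(K\PP(r) \t C_t^{\oplus b_t} \t F_r, 0) \ar[r] & G_t(K\PP(r) \t C_*(E\Sigma_r)) \t F_r
}$\end{array}
\end{equation*}
where $b_t$ is the rank of the free $\KK$-module of $t$-simplices of $E\Sigma_r$ modulo the $\Sigma_r$-action (equivalently the $t$-cells attached), the left arrow is a coproduct of generating cofibrations $C_t \fdr D_t$ tensored with $F_r$ and with the cofibrant dg-module $K\PP(r)$, hence a cofibration of $\Sigma_*$-modules. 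The colimit over $t$ and over $r$ then exhibits $(K\PP \boxtimes C_*(E\Sigma_\bullet), \delta)$ as a cofibrant $\Sigma_*$-module.

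The main obstacle — or at least the only point requiring care — is bookkeeping the signs and the interaction between the two gradings: the sign $(-1)^{|\gamma|}$ in $\delta(\gamma \t \underline w) = (-1)^{|\gamma|}\gamma \t \delta(\underline w)$ means one is really tensoring with a \emph{shifted} copy of $C_*(E\Sigma_r)$ depending on the internal degree in $K\PP(r)$, so one should be slightly careful that the filtration by simplicial degree is still $\delta$-stable and that the associated graded is what is claimed; but this is harmless since the sign depends only on $\gamma$ and not on $\underline w$. Everything else is a direct transcription of the argument already used for Lemma~\ref{cofbimod1}. Once the lemma is proved, Proposition~\ref{cofbimod} applies verbatim (the differential $\Delta$ being decomposable, as noted) to conclude that $(\PP \circ (K\PP \boxtimes C_*(E\Sigma_\bullet)) \circ \PP, \Delta+\delta)$ is a cofibrant $\PP$-bimodule.
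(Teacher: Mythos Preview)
Your argument is correct and is essentially a fleshed-out version of the paper's proof, which is a one-liner: the paper simply observes that, aritywise, the freeness of $C_*(E\Sigma_r)$ over $\KK[\Sigma_r]$ lets one write $0 \to K\PP(r)\boxtimes C_*(E\Sigma_r)$ in the form ``$(0\to (K\PP(r),0))\otimes\text{(free $\Sigma_r$-stuff)}$'', reducing the question to whether $0 \to (K\PP(r),0)$ is a cofibration of dg-modules --- immediate since $K\PP(r)$ is $\KK$-projective with zero differential. Your filtration by simplicial degree and explicit cell-attachment pushouts make this precise, and your remark about the Koszul sign $(-1)^{|\gamma|}$ correctly disposes of the only subtlety the paper leaves implicit.

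Two cosmetic slips in your displayed pushout square, however: the generating cofibration of dg-modules is $C_n \hookrightarrow D_n$ (small into large), so $C_t$ belongs in the top-left corner and $D_t$ in the bottom-left, not the other way around; and the right-hand column should read simply $G_{t-1}$ and $G_t$, without the trailing $\otimes F_r$, since $G_t(K\PP(r)\otimes C_*(E\Sigma_r))$ already carries the diagonal $\Sigma_r$-action and is therefore already a $\Sigma_*$-module concentrated in arity $r$. With those two corrections the diagram is the intended one and the argument goes through.
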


\begin{proof}
We consider the map $f$ of $\Sigma_*$-modules $0 \fdr (K\PP \boxtimes C_*(E\Sigma_\bullet), \delta)$, which can be written as $f=(0 \t id_{\Sigma_r})_{r\in \NN}$.
According to the description of generating cofibrations in Section \ref{modcatsigma}, we have to prove that $0 \fdr (K\PP(r),0)$ is a cofibration of dg-modules. But $(K\PP(r),0)$ is assumed to be free and its differential is $0$. Hence the claim is immediate.
\end{proof}

Thus we have proved
\begin{prop}
The $\PP$-bimodule $(\PP \circ (K\PP \boxtimes C_*(E\Sigma_\bullet)) \circ \PP, \Delta + \delta)$ is cofibrant.
\end{prop}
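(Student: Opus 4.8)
The plan is to verify the two hypotheses of Proposition~\ref{cofbimod} for the quasi-free $\PP$-bimodule $(\PP \circ M \circ \PP, \Delta+\delta)$ with $M = K\PP \boxtimes C_*(E\Sigma_\bullet)$. Proposition~\ref{cofbimod} requires first that $M$ (equipped here with its internal differential $\delta$) is a cofibrant $\Sigma_*$-module, and second that the remaining part of the differential, namely $\Delta$, is decomposable in the sense that $\Delta(M) \subseteq \bar\PP \circ M \circ \PP + \PP \circ M \circ \bar\PP$. The second point is essentially immediate from the very construction of $\Delta$ in the previous subsection: inspecting the formula for $\Delta$, every summand has the operadic element $\kappa(\gamma') \in \PP$ (or $\kappa(\gamma'')\in\PP$) of arity $2$ grafted either above or below the remaining $K\PP$-factor, so the image lands in $\bar\PP \circ M \circ \PP + \PP \circ M \circ \bar\PP$; I would record this in one sentence, pointing back to the explicit description of $\Delta$. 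Here one must also note that $(\PP \circ M \circ \PP, \Delta+\delta)$ is legitimately viewed as the quasi-free bimodule $\bigl(\PP \circ (M,\delta) \circ \PP, \Delta\bigr)$, i.e.\ we absorb $\delta$ into the internal differential of the $\Sigma_*$-module $M$, so that $\Delta$ is the only twisting cochain; this is the viewpoint under which Proposition~\ref{cofbimod} applies.

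The heart of the argument is therefore the intermediate lemma that $(K\PP \boxtimes C_*(E\Sigma_\bullet), \delta)$ is a cofibrant $\Sigma_*$-module. By the description of generating cofibrations of $\Sigma_*$-modules recalled in Section~\ref{modcatsigma} (tensor products $i \t F_r$ of generating cofibrations of dg-modules with the free $\Sigma_*$-modules $F_r$), and by the transfer/lifting formalism, it suffices to exhibit the map $0 \fdr (K\PP \boxtimes C_*(E\Sigma_\bullet), \delta)$ as a (possibly transfinite) composite of pushouts of such generators. The key observation is that, arity by arity, $K\PP(r)\t C_*(E\Sigma_r)$ as a $\Sigma_r$-module is of the form $(K\PP(r),0) \t \KK[\Sigma_r]^{\t ?}$ — more precisely, $C_*(E\Sigma_r)$ is a complex of free $\KK[\Sigma_r]$-modules (the homogeneous bar construction), and $K\PP(r)$ carries the zero differential and is assumed projective over $\KK$ by our standing convention. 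So the map $f = (0 \t \mathrm{id})_{r}$ is built from maps $0 \fdr (K\PP(r),0)\t F_r$, and since $K\PP(r)$ with zero differential is cofibrant as a dg-module (it is a projective $\KK$-module concentrated so that $0\fdr K\PP(r)$ is an iterated pushout of the generators $C_n \fdr D_n$ — or more simply, zero-differential projective complexes are cofibrant), tensoring with $F_r$ yields a cofibration of $\Sigma_*$-modules. Taking the coproduct over $r\in\NN$ gives that $0 \fdr (K\PP \boxtimes C_*(E\Sigma_\bullet),\delta)$ is a cofibration, i.e.\ the $\Sigma_*$-module is cofibrant.

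I expect the main obstacle — or at least the only point requiring genuine care rather than bookkeeping — to be handling the internal differential $\delta$ correctly when checking cofibrancy of $M$: one must make sure that the relevant generating cofibrations really are of the tensor form $i \t F_r$ and that the bar-construction differential $\delta$ is accounted for by choosing the dg-module generators $i$ appropriately (i.e.\ that $(K\PP(r)\t C_*(E\Sigma_r),\delta)$, forgetting the $\Sigma_r$-action, is a cofibrant dg-module, which holds because it is a bounded-below complex of projective $\KK$-modules). Once $M$ is cofibrant as a $\Sigma_*$-module and $\Delta$ is seen to be decomposable, Proposition~\ref{cofbimod} (proved above from Lemmas~\ref{cofbimod1} and~\ref{cofbimod2} by the filtration-by-arity argument) immediately yields that $(\PP \circ (K\PP \boxtimes C_*(E\Sigma_\bullet)) \circ \PP, \Delta+\delta)$ is a cofibrant $\PP$-bimodule, completing the proof.
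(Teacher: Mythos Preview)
Your proposal is correct and follows essentially the same route as the paper: absorb $\delta$ into the internal differential of $M = K\PP \boxtimes C_*(E\Sigma_\bullet)$, observe that $\Delta$ is decomposable by construction, prove the intermediate lemma that $(M,\delta)$ is a cofibrant $\Sigma_*$-module using the free $\Sigma_r$-module structure of $C_*(E\Sigma_r)$ together with the fact that $K\PP(r)$ has zero differential and is $\KK$-projective, and then invoke Proposition~\ref{cofbimod}. Your treatment of the cofibrancy of $M$ is in fact slightly more careful than the paper's own argument, which compresses the same idea into the line ``$f=(0\t id_{\Sigma_r})_{r\in\NN}$'' and the observation that $0\fdr (K\PP(r),0)$ is a cofibration of dg-modules.
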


Besides, we have seen in the previous paragraph that $(\PP \circ (K\PP \boxtimes C_*(E\Sigma_\bullet)) \circ \PP, \Delta + \delta)$ is weakly equivalent to $\PP$.

So we can use $K\PP \boxtimes C_*(E\Sigma_\bullet)$ to compute $\Gamma$-homology of algebras over $\PP$. Explicitely, we have:

\begin{theo}
 Let $\PP$ be a binary Koszul operad, $A$ an algebra over $\PP$ and $E$ a right $U_\PP(A)$-module.
$$H\Gamma^\PP_* (A,E) = H_*(E \t (K\PP \boxtimes C_*(E\Sigma_\bullet)) \circ A, \partial'')$$
where $\partial''$ is the differential induced by $\Delta + \delta$ in two steps, explained in the proofs of Lemmas \ref{RempCofib} and \ref{lemmreduc}.

Explicitly, for $x \in E$, $\gamma \in K\PP$ such that
$$\Delta_+(\gamma)= 
 \sum_{i<j} 
\begin{array}{c}
 $\xymatrix@M=3pt@H=4pt@W=3pt@R=8pt@C=4pt{
 & i\ar@{-}[dr] &  & j \ar@{-}[dl] & \\
1\ar@{.}[rr] \ar@{-}[drr] & \ar@{-}[dr]& \gamma_+''\ar@{-}[d]|{e}\ar@{.}[r] & \hat{j}\ar@{-}[dl]\ar@{.}[r]& r \ar@{-}[dll] \\
 &  & \gamma_+'\ar@{-}[d] &  & \\
 & &  & &
}$\end{array} \textrm{ and }
\Delta_-(\gamma)= \sum_{i} 
\begin{array}{c}
 $\xymatrix@M=3pt@H=4pt@W=3pt@R=8pt@C=4pt{
1 \ar@{-}[drr]\ar@{.}[rrr] & \ar@{-}[dr] & \ar@{-}[d] & \hat{i}\ar@{-}[dl]\ar@{.}[r]& r \ar@{-}[dll] \\
i\ar@{-}[dr] &  & \gamma_-''\ar@{-}[dl] &   & \\
& \gamma_-'\ar@{-}[d] & &  & \\
 & &  & &
}$\end{array}, $$
$(w_0, \ldots, w_*) \in C_*(E\Sigma_r)$ and $a_1, \ldots, a_r$ in $A$, we have:
$$\partial''(x \t \gamma \t (w_0, \ldots, w_*) \t (a_1, \ldots ,a_r)) =$$
$$\sum_{i<j} \pm x \t \gamma_+' \t (c^e_{i,j}(w_0), \ldots, c^e_{i,j}(w_*)) \t (a_1 , \ldots , \kappa(\gamma_+'')(a_i,a_j) , \ldots ,\widehat{a_j} , \ldots a_r) $$
$$+\sum_i \pm \kappa(\gamma_-')(x,a_i) \t \gamma_-'' \t (c_{\emptyset,i}(w_0), \ldots, c_{\emptyset,i}(w_*)) \t (a_1 , \ldots , \widehat{a_i} , \ldots , a_r).$$
\end{theo}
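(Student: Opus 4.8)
The plan is to instantiate the general definition of $\Gamma$-homology from Section~\ref{defGamma} with the particular $\PP$-bimodule built in the preceding paragraphs, and then to make the resulting differential on the small complex explicit by unwinding the two reduction steps of Lemmas~\ref{RempCofib} and~\ref{lemmreduc}.

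First I would record that the bimodule $(\PP \circ M \circ \PP, \Delta+\delta)$ with $M = K\PP \boxtimes C_*(E\Sigma_\bullet)$ fulfils all the requirements of Section~\ref{analog}: the $\Sigma_*$-module $M$ is connected, it is cofibrant as a $\Sigma_*$-module, the twisting differential $\Delta+\delta$ is decomposable, and, by the results of the preceding two paragraphs (cofibrancy following Proposition~\ref{cofbimod}, weak equivalence to $\PP$ following the spectral argument), the bimodule is cofibrant and weakly equivalent to $\PP$. Hence hypotheses (1) and (2) of Section~\ref{analog} hold, and the definition of $\Gamma$-homology together with Lemma~\ref{HomolInvar2} gives immediately $H\Gamma^\PP_*(A,E) = H_*(E \t M \circ A, \partial'')$, where $\partial''$ is the differential manufactured by the two-step procedure. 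It therefore only remains to identify this $\partial''$ with the displayed formula.

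To do this I would unwind the two steps on a generator $c = \gamma \t \underline w \t (a_1,\ldots,a_r) \in C := M \circ A$. By Lemma~\ref{RempCofib} the cofibrant replacement is $Q_A = (\PP(C), \partial')$ with $\partial' = (\Delta+\delta)\circ_\PP A$; the $\delta$-part only moves the permutation coordinates and stays internal, so the content is in $\Delta \circ_\PP A$. Reading off the explicit construction of $\Delta$: the $\Delta_+$-summands put the binary factor $\kappa(\gamma_+'')$ in the right-hand $\PP$, above the input $e$ of $\gamma_+'$, so after $\circ_\PP A$ this factor is absorbed into the arguments and the summand is again a bare generator $\gamma_+' \t c^e_{i,j}(\underline w) \t (a_1, \ldots, \kappa(\gamma_+'')(a_i,a_j), \ldots, \widehat{a_j}, \ldots, a_r) \in C$; the $\Delta_-$-summands put $\kappa(\gamma_-')$ in the left-hand $\PP$ near the root, with one input the $M$-node $\gamma_-''$ and the other a bare leaf, so after $\circ_\PP A$ the summand becomes the depth-one decomposable element $\kappa(\gamma_-')(\mu_i, 1_M \t a_i)$ of $\PP(C)$, where $\mu_i = \gamma_-'' \t c_{\emptyset,i}(\underline w) \t (a_k)_{k\ne i} \in C$ and $1_M \t a_i \in M(1)\circ A \subseteq C$. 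Then I would apply Lemma~\ref{lemmreduc} with this $C$ and pass to $E \t_{U_\PP(Q_A)} (U_\PP(Q_A)\t C) = E \t C$, computing the induced differential through Figure~\ref{figdiff}: the $\Delta_+$-term, being a bare generator, passes to $x \t (\text{that generator})$; for the $\Delta_-$-term one applies the Leibniz rule for $d$ in $\Omega_\PP$, and of its two summands the one differentiating $1_M\t a_i$ produces $\kappa(\gamma_-')(\mu_i,\diamond)\t(1_M\t a_i)$, which dies after $E \t_{U_\PP(Q_A)} -$ because the $M$-arity $r-1$ of $\mu_i$ is at least $2$ (for all nonzero $\Delta_-$-terms) so $\mu_i$ is killed by the augmentation $Q_A \to A$, while the one differentiating $\mu_i$ produces $\kappa(\gamma_-')(\diamond, 1_M\t a_i)\t\mu_i$, which under the map $U_\PP(Q_A)\to U_\PP(A)$ becomes $\kappa(\gamma_-')(x,a_i)\t\mu_i$. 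Collecting the two contributions yields precisely the stated formula; the coefficient signs are the Koszul signs from moving $d$ and $\diamond$ past graded elements together with the sign produced whenever $\kappa$ is permuted past a suspension, exactly as in the proof that $\Delta^2 = 0$, and well-definedness on $K\PP$ (not merely on the representatives ordered from $1$ to $r$) comes from Lemma~\ref{cijcompatibles}.

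The main obstacle I expect is the bookkeeping in this last step: one has to be careful about which operadic factors get absorbed into the algebra arguments (and so only modify the combinatorial piece $M\circ A$) versus which become genuine elements of $U_\PP(A)$ acting on the coefficients $E$ — in particular the vanishing of the ``wrong'' summand of the $\Delta_-$ Leibniz expansion, which is exactly what forces the left coefficient to be $\kappa(\gamma_-')(x,a_i)$ rather than something built from $\mu_i$ — together with the consistent propagation of signs through the identifications of Figures~\ref{figPsi} and~\ref{figdiff}.
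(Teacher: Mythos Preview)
Your approach is correct and matches the paper's: the paper does not give a separate proof of this theorem but simply observes that the preceding paragraphs establish that $(\PP \circ (K\PP \boxtimes C_*(E\Sigma_\bullet)) \circ \PP, \Delta+\delta)$ is a cofibrant $\PP$-bimodule weakly equivalent to $\PP$ satisfying hypotheses (1) and (2) of Section~\ref{analog}, whence the identification with $H\Gamma^\PP_*$ is immediate from the definition in Section~\ref{defGamma}. Your unwinding of $\partial''$ through Lemmas~\ref{RempCofib} and~\ref{lemmreduc} is in fact more detailed than what the paper records (the paper simply states the explicit formula and refers back to those two lemmas), and your identification of which operadic factors land in $U_\PP(A)$ acting on $E$ versus which get absorbed into the algebra arguments is exactly the mechanism behind the displayed formula.
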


Signs are induced by the usual Koszul rule.

\subsection{Examples}
\begin{enumerate}
 \item For $\PP=\mathsf{Com}$, we have $K\PP= (\Lambda\mathsf{Lie})^{\#}$ where $\Lambda$ denotes the operadic suspension and $\#$ denotes the linear duality.
Here we retrieve easily Robinson's complex.
 \item For $\PP=\mathsf{Lie}$, we have $K\PP= (\Lambda \mathsf{Com})^\#$. We denote  $\gamma_r$ the generator of $K\PP$ in arity $r$ (it is in degree $1-r$). Let $A$ be a Lie algebra concentrated in degree $0$.
$$\partial''(x \t \gamma_r \t (w_0, \ldots, w_*) \t (a_1, \ldots ,a_r)) =$$
$$\sum_{i<j} (-1)^j x \t \gamma_{r-1} \t (c^e_{i,j}(w_0), \ldots, c^e_{i,j}(w_*)) \t (a_1 , \ldots , \kappa(\gamma_2)(a_i,a_j) , \ldots ,\widehat{a_j} , \ldots a_r) $$
$$+\sum_i (-1)^{i-1} \kappa(\gamma_2)(x,a_i) \t \gamma_{r-1} \t (c_{\emptyset,i}(w_0), \ldots, c_{\emptyset,i}(w_*)) \t (a_1 , \ldots , \widehat{a_i} , \ldots , a_r).$$
Note that we find the same signs as in the complex of Chevalley-Eilenberg.
\end{enumerate}
\vspace{1.5cm}

Similarly for the cohomology, we have the following theorem:
\begin{theo}
 Let $\PP$ be a binary Koszul operad, $A$ an algebra over $\PP$ and $F$ a left $U_\PP(A)$-module.
$$H^*_\QQ (A,F)= H^* (Hom(K\PP \boxtimes C_*(E\Sigma_\bullet) \circ A, F), \partial'').$$
where $\partial''$ is the differential induced by $\Delta + \delta$ in two steps, explained in the proofs of Lemmas \ref{RempCofib} and \ref{lemmreduc}.
\end{theo}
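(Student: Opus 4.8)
The plan is to obtain the cohomology statement as the exact dual of the homology theorem just proved, since both are instances of the general reduction established in Section \ref{defGammaco}. First I would recall that $(\PP \circ (K\PP \boxtimes C_*(E\Sigma_\bullet)) \circ \PP, \Delta + \delta)$ has been shown, in the preceding subsections, to be a cofibrant quasi-free $\PP$-bimodule weakly equivalent to $\PP$, with underlying $\Sigma_*$-module $M = K\PP \boxtimes C_*(E\Sigma_\bullet)$ connected (since $K\PP(0) = 0$) and cofibrant, and with decomposable differential $\Delta + \delta$. Thus $M$ satisfies hypotheses (1) and (2) of Section \ref{analog}, which is exactly what is needed to feed it into the definition of $\Gamma$-cohomology.

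Next I would invoke the construction of Section \ref{defGammaco} directly: for this particular choice of resolution, Lemma \ref{RempCofib} produces the cofibrant replacement $(\PP \circ M \circ A, \partial')$ with $\partial' = (\Delta+\delta) \circ_\PP A$, and Lemma \ref{lemmreduc} together with the $\Hom$-$\otimes$ adjunction identifies $\Hom_{U_\PP(Q_A)}(\Omega_\PP(Q_A), F)$ with $(\Hom_{\mathcal C}(M \circ A, F), \partial'')$, where $\partial''$ is the two-step differential described in the proofs of those two lemmas. By definition this computes $H\Gamma^*_\PP(A,F)$, and by the second Theorem of Section \ref{defGamma} (the cohomological one), $H\Gamma^*_\PP(A,F) = H^*_\QQ(A,F)$ for $\QQ$ any $\Sigma_*$-cofibrant replacement of $\PP$. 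Substituting $M = K\PP \boxtimes C_*(E\Sigma_\bullet)$ gives the displayed formula.

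The only genuine content beyond bookkeeping is to check that the well-definedness argument behind $\Gamma$-cohomology — the cohomological analogue of Lemmas \ref{HomolInvar} and \ref{HomolInvar2}, mentioned but not spelled out in Section \ref{defGammaco} — applies here, so that the answer does not depend on the chosen resolution; this is where I expect the main (though still routine) work to lie, since one must dualize the filtration-and-spectral-sequence argument of Lemma \ref{HomolInvar}. Concretely, one filters $(\Hom_{\mathcal C}(M_1 \circ A, F), \partial_1'')$ by the arity of the operations, observes that a weak equivalence $\phi$ of bimodules with decomposable differentials restricts on associated graded to $\Hom_{\mathcal C}(\bar\phi \circ A, F)$ with $\bar\phi$ a trivial $\Sigma_*$-cofibration, and then concludes via convergence of the resulting cohomological spectral sequence. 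Having verified this, nothing else is needed: the explicit formula for $\partial''$ on elements is read off verbatim from the homology theorem, replacing $E \otimes -$ by $\Hom_{\mathcal C}(-, F)$ and transposing the two summands of the differential, with the same Koszul signs.
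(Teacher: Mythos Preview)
Your proposal is correct and matches the paper's approach: the paper gives no proof for this theorem at all, introducing it only with the phrase ``Similarly for the cohomology, we have the following theorem,'' so the argument is meant to be the straightforward dualization of the homology case that you outline. Your sketch in fact supplies more detail than the paper does, correctly isolating the one point left implicit (the cohomological analogue of Lemmas \ref{HomolInvar} and \ref{HomolInvar2}, which the paper only asserts exists in Section \ref{defGammaco}).
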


\vspace{1.5cm}

\section*{Acknowledgements}
I am grateful to David Chataur and Benoit Fresse for many useful discussions on the matter of this article.
I also would like to thank Joan Mill\`es for his careful reading.

\vspace{1.5cm}


\begin{thebibliography}{} 
   \bibitem[And]{Andre} M. Andr\'e, Homologie des alg\`ebres commutatives, Die Grundlehren der mathematischen Wissenschaften \textbf{206}, Springer Verlag, 1974.
   \bibitem[Bal]{Balav} D. Balavoine, \textit{Homology and cohomology with coefficients, of an algebra over a quadratic operad}, J. Pure Appl. Algebra, \textbf{132} (1998), 221-258.   
   \bibitem[Bas]{Bast} M. Basterra, \textit{Andr\'e-Quillen cohomology of commutative $S$-algebras},  J. Pure Appl. Algebra  \textbf{144}  (1999),  111-143.
   \bibitem[CRS]{CRS} D. Chataur, J.-L. Rodriguez, J. Scherer, \textit{Realizing operadic plus-constructions as nullifications}, K-Theory, \textbf{33} (2004, 1-21.
   \bibitem[DS]{DS} W. Dwyer, J. Spalinski, \textit{Homotopy theories and model categories, in} Handbook of Algebraic Topology, Elsevier, 1995, 73-126.
   \bibitem[F1]{BouquinBenoit} B. Fresse, Modules over operads and functors, Lecture Notes in Mathematics \textbf{1967}, Springer Verlag, 2009.
   \bibitem[F2]{Arolla2008} B. Fresse, Operadic cobar constructions, cylinder objects and homotopy morphisms of algebras over operads, in "Alpine perspectives on algebraic topology (Arolla, 2008)", Contemp. Math. \textbf{504}, 
   \bibitem[F3]{Evanston} B. Fresse, {\it Koszul duality of operads and homology of partition posets, in} "Homotopy theory and its applications (Evanston, 2002)", Contemp. Math. \textbf{346} (2004), 115-215. 
   \bibitem[GJ]{GJ} E. Getzler and J. D. S. Jones, \textit{Operads, homotopy algebra and iterated integrals for double
loop spaces}, \texttt{hep-th/9403055} (1994).  
   \bibitem[GK]{GK} V. Ginzburg, M. Kapranov, {\it Koszul duality for operads}, Duke Math. J.  \textbf{76} (1995), 203-272.
   \bibitem[GH]{GH} P. Goerss, M. Hopkins,  Andr\'e-Quillen (co)-homology for simplicial algebras over simplicial operads, in ``Une d\'egustation topologique [Topological morsels]: homotopy theory in the Swiss Alps (Arolla, 1999)'' Contemp. Math. \textbf{265}, 41-85.
   \bibitem[Har]{Harrison} D. Harrison, {\it Commutative algebras and cohomology}, Trans. Amer. Math. Soc.  \textbf{104}  (1962), 191-204. 
   \bibitem[Hin]{Hinich} V. Hinich, \textit{Homological algebra of homotopy algebras},  Comm. Algebra  \textbf{25}  (1997),  no. 10, 3291--3323.
   \bibitem[Hir]{Hirsch} P. Hirschhorn, Model categories and their localizations, Mathematical Surveys and Monographs, \textbf{99}, 2003.
   \bibitem[Hov]{Hovey} M. Hovey, Model categories, Mathematical Surveys and Monographs, \textbf{63}, 1999.
   \bibitem[Man]{Man} M. Mandell, \textit{Topological Andr\'e-Quillen cohomology and $E_\infty$ Andr\'e-Quillen cohomology},  Adv. Math.  \textbf{177} (2003), 227-279. 
   \bibitem[PR]{PR} T. Pirashvili, B. Richter, \textit{Robinson-Whitehouse complex and stable homotopy},  Topology  \textbf{39}  (2000),  no. 3, 525-530.
   \bibitem[Qui]{Quillen} D. Quillen, \textit{On the (co-) homology of commutative rings},  Applications of Categorical Algebra (Proc. Sympos. Pure Math., Vol. XVII, New York, 1968), Amer. Math. Soc., 65-87.
   \bibitem[Rob]{Robinson} A. Robinson, \textit{Gamma homology, Lie representations and $E\sb \infty$ multiplications},  Invent. Math.  \textbf{152}  (2003), 331-348. 
   \bibitem[RW]{RW} A. Robinson, S. Whitehouse, \textit{Operads and $\Gamma$-homology of commutative rings},  Math. Proc. Cambridge Philos. Soc.  \textbf{132 } (2002), 197-234. 
\end{thebibliography}
\end{document}